\def\algspacing{\alg@unmargin}
\newlength{\algorithmwidth}
\theoremstyle{plain}
\newtheorem{theorem}{Theorem}[section]
\newtheorem{lemma}[theorem]{Lemma}
\theoremstyle{definition}
\theoremstyle{remark}
\newtheorem*{remark}{Remark}
\numberwithin{equation}{section}
\newcommand{\<}{\left\langle}
\renewcommand{\>}{\right\rangle}
\newcommand{\bigO}{\mathrm{O}}
\newcommand{\defby}{\overset{\mathrm{\scriptscriptstyle{def}}}{=}}
\def \E {\mathbb{E}}
\newcommand{\vct}[1]{\bm{#1}}
\newcommand{\mtx}[1]{\bm{#1}}
\begin{document}
\bibliographystyle{plain}
\setlength{\parindent}{0in}
\parskip 7.2pt

\title[]{Two-subspace Projection Method for Coherent Overdetermined Systems}
\author{Deanna Needell and Rachel Ward}
\date{\today}

\begin{abstract}
We present a Projection onto Convex Sets (POCS) type algorithm for solving systems of linear equations.  POCS methods have found many applications ranging from computer tomography to digital signal and image processing.  The Kaczmarz method is one of the most popular solvers for overdetermined systems of linear equations due to its speed and simplicity.  Here we introduce and analyze an extension of the Kaczmarz method which iteratively projects the estimate onto a solution space given from two randomly selected rows.  We show that this projection algorithm provides exponential convergence to the solution in expectation.  The convergence rate significantly improves upon that of the standard randomized Kaczmarz method when the system has coherent rows.  We also show that the method is robust to noise, and converges exponentially in expectation to the noise floor.  Experimental results are provided which confirm that in the coherent case our method significantly outperforms the randomized Kaczmarz method.
\end{abstract}

\maketitle

\section{Introduction}\label{sec:intro}
We consider a consistent system of linear equations of the form
$$
\mtx{A}\vct{x} = \vct{b},
$$
where $\vct{b}\in\mathbb{C}^m$ and $\mtx{A}\in\mathbb{C}^{m\times n}$ is a full-rank $m\times n$ matrix that is overdetermined, having more rows than columns ($m\geq n$).  When the number of rows of $\mtx{A}$ is large, it is far too costly to invert the matrix to solve for $\vct{x}$, so one may utilize an iterative solver such as the Projection onto Convex Sets (POCS) method, used in many applications of signal and image processing~\cite{CFMSS92,SS87:Applications}.  The Kaczmarz method is often preferred, iteratively cycling through the rows of $\mtx{A}$ and orthogonally projecting the estimate onto the solution space given by each row~\cite{K37:Angena}.  Precisely, let us denote by $\vct{a_1}$, $\vct{a_2}$, $\ldots$, $\vct{a_m}$ the rows of $\mtx{A}$ and $b_1$, $b_2$, $\ldots$, $b_m$ the coordinates of $\vct{b}$.  For simplicity, we will assume throughout that the matrix $\mtx{A}$ is \textit{standardized}, meaning that each of its rows has unit Euclidean norm; generalizations from this case will be straightforward.  Given some trivial initial estimate $\vct{x_0}$, the Kaczmarz method cycles through the rows of $\mtx{A}$ and in the $k$th iteration projects the previous estimate $\vct{x_k}$ onto the solution hyperplane of $\langle \vct{a_i}, \vct{x}\rangle = b_i$ 
where $i = k$ mod $m$, 
$$
\vct{x_{k+1}} = \vct{x_k} + (b_i - \langle \vct{a_i}, \vct{x_{k}} \rangle)\vct{a_i}.
$$

Theoretical results about the rate of convergence of the Kaczmarz method have been difficult to obtain, and most are based on quantities which are themselves hard to compute~\cite{DH97:Therate,G05:Onthe}. Even more importantly, the method as we have just described depends heavily on the ordering of the rows of $\mtx{A}$.  A malicious or unlucky ordering may therefore lead to extremely slow convergence.  To overcome this, one can select the rows of $\mtx{A}$ in a \emph{random} fashion rather than cyclically~\cite{HM93:Algebraic,N86:TheMath}.  Strohmer and Vershynin analyzed a randomized version of the Kaczmarz method that in each iteration selects a row of $\mtx{A}$ with probability proportional to its Euclidean norm~\cite{SV09:Arand,SV06:Arandom}.  Thus in the standardized case we consider here, a row of $\mtx{A}$ is chosen uniformly at random.  This randomized Kaczmarz method is described by the following pseudocode.

\begin{algorithm}[thb]
\caption{Randomized Kaczmarz}
	\label{alg:rk}
\centering \fbox{
\begin{minipage}{.99\textwidth} 
\vspace{4pt}
\alginout{Standardized matrix $\mtx{A}$, vector $\vct{b}$}
{An estimation $\vct{x_k}$ of the unique solution $\vct{x}$ to $\mtx{A}\vct{x} = \vct{b}$
}
\vspace{8pt}\hrule\vspace{8pt}

\begin{algtab*}
Set $\vct{x_0}$.
	 	\hfill \{ Trivial initial approximation \} \\		 
$k \leftarrow 0$ \\

\algrepeat
	$k \leftarrow k + 1$ \\
	Select $ r \in \{1, 2, \ldots, n\}$
		\hfill \{ Randomly select a row of $\mtx{A}$ \} \\
		Set $ \vct{x_k} \leftarrow \vct{x_{k-1}} + (b_r - \langle \vct{a_r},\vct{x_{k-1}} \rangle)\vct{a_r}$
		\hfill \{ Perform projection \} \\
	
\end{algtab*}
\vspace{-10pt}
\end{minipage}}
\end{algorithm}

Note that this method as stated selects each row \textit{with replacement}, see~\cite{Valley} for a discussion on the differences in performance when selecting with and without replacement.  Strohmer and Vershynin show that this method exhibits exponential convergence in expectation~\cite{SV09:Arand,SV06:Arandom},
\begin{equation}\label{SV}
\E \|\vct{x_k} - \vct{x}\|_2^2 \leq \left( 1 - \frac{1}{R}\right)^k\|\vct{x_0} - \vct{x}\|_2^2,\quad\text{where}\quad R \defby \|\mtx{A}\|_F^2\|\mtx{A}^{-1}\|^2.
\end{equation}
Here and throughout, $\|\cdot\|_2$ denotes the vector Euclidean norm, $\| \cdot \|$ denotes the matrix spectral norm, $\|\cdot\|_F$ denotes the matrix Frobenius norm, and the inverse $\|\mtx{A}^{-1}\| = \inf\{M : M\|\mtx{A}\vct{x}\|_2 \geq \|\vct{x}\|_2 \text{ for all }\vct{x}\}$ is well-defined since $\mtx{A}$ is full-rank.  This bound shows that when $\mtx{A}$ is well conditioned, the randomized Kaczmarz method will converge exponentially to the solution in just $\bigO(n)$ iterations (see Section 2.1 of~\cite{SV09:Arand} for details).  The cost of each iteration is the cost of a single projection and takes $\bigO(n)$ time, so the total runtime is just $\bigO(n^2)$.  This is superior to Gaussian elimination which takes $\bigO(mn^2)$ time, especially for very large systems.  The randomized Kaczmarz method even substantially outperforms the well-known conjugate gradient method in many cases~\cite{SV09:Arand}.  

Leventhal and Lewis show that for certain probability distributions, the expected rate of convergence can be bounded in terms of other natural linear-algebraic quantities.  They propose generalizations to other convex systems~\cite{leventhal2010randomized}.
Recently, Chen and Powell proved that for certain classes of random matrices $\mtx{A}$, the randomized Kaczmarz method convergences exponentially to the solution not only in expectation but also almost surely~\cite{CP12:almost}.   

In the presence of noise, one considers the possibly inconsistent system $\mtx{A}\vct{x} + \vct{w} \approx \vct{b}$ for some error vector $\vct{w}$.  In this case the randomized Kaczmarz method converges exponentially fast to the solution within an error threshold~\cite{N10:rknoisy},
\begin{equation}\label{eq:noise} 
\E\|\vct{x_k} - x\|_2 \leq \left( 1 - \frac{1}{R}\right)^{k/2}\|\vct{x_0} - x\|_2 + \sqrt{R}\|\vct{w}\|_{\infty},
\end{equation}
where $R$ the the scaled condition number as in \eqref{SV} and $\|\cdot\|_{\infty}$ denotes the largest entry in magnitude of its argument.  This error is sharp in general~\cite{N10:rknoisy}.  Modified Kaczmarz algorithms can also be used to solve the least squares version of this problem, see for example~\cite{drineas2007faster,ENP10:semi,HN90:Onthe,censor1983strong} and the references therein.

\subsection{Coherent systems}

Although the convergence results for the randomized Kaczmarz method hold for any consistent system, the factor $\frac{1}{R}$ in the convergence rate may be quite small for matrices with many correlated rows.  
Consider for example the reconstruction of a bandlimited function from nonuniformly spaced samples, as often arises in geophysics as it can be physically challenging to take uniform samples.  Expressed as a system of linear equations, the sampling points form the rows of a matrix $\mtx{A}$; for points that are close together, the corresponding rows will be highly correlated.  

To be precise, we examine the \textit{coherence} of a standardized matrix $\mtx{A}$ by defining the quantities
\begin{equation}\label{mus}
\Delta = \Delta(\mtx{A}) = \max_{j\ne k}|\langle \vct{a_j}, \vct{a_k}\rangle| \quad{and}\quad
\delta = \delta(\mtx{A}) = \min_{j\ne k}|\langle \vct{a_j}, \vct{a_k}\rangle|.
\end{equation}
Note that because $\mtx{A}$ is standardized, $0 \leq \delta \leq \Delta \leq 1$. 
It is clear that when $\mtx{A}$ has high coherence parameters, $\|\mtx{A}^{-1}\|$ is very small and thus the factor $R$ in~\eqref{SV} is also small, leading to a weak bound on the convergence.  Indeed, when the matrix has highly correlated rows, the angles between successive orthogonal projections are small and convergence is stunted.   We can explore a wider range of orthogonal directions by looking towards solution hyperplanes spanned by  \emph{pairs} of rows of $\mtx{A}$.  We thus propose a modification to the randomized Kaczmarz method where each iteration performs an orthogonal projection onto a two-dimensional subspace spanned by a randomly-selected pair of rows. We point out that the idea of projecting in each iteration onto a subspace obtained from multiple rows rather than a single row has been previously investigated numerically, see e.g.~\cite{FS95,CFMSS92}.

With this as our goal, a single iteration of the modified algorithm will consist of the following steps.  Let ${\vct{x_k}}$ denote the current estimation in the $k$th iteration.

\begin{itemize}
\item Select two distinct rows $\vct{a_r}$ and $\vct{a_s}$ of the matrix $\mtx{A}$ at random
\item Compute the translation parameter $\varepsilon$
\item Perform an intermediate projection: $\vct{y} \leftarrow \vct{x_k} + \varepsilon(b_r - \langle \vct{x_k}, \vct{a_r}\rangle)\vct{a_r}$
\item Perform the final projection to update the estimation: $\vct{x_{k+1}} \leftarrow \vct{y} + (b_s - \langle \vct{y}, \vct{a_s}\rangle)\vct{a_s}$
\end{itemize}

In general, the optimal choice of $\varepsilon$ at each iteration of the two-step procedure corresponds to subtracting from $\vct{x_k}$ its orthogonal projection onto the solution space $\{\vct{x}: \langle \vct{a_r}, \vct{x}\rangle = b_r \text{ and } \langle \vct{a_s}, \vct{x}\rangle = b_s\}$, which motivates the name two-subspace Kaczmarz method. 
By \emph{optimal choice} of $\varepsilon$, we mean the value $\varepsilon_{opt}$ minimizing the residual $\|\vct{x} - \vct{x_{k+1}}\|_2^2$.   Expanded, this reads  
$$
\|\vct{x} - \vct{x_{k+1}}\|_2^2 = \|\varepsilon(b_r - \<\vct{x_k}, \vct{a_r}\>)(\vct{a_r} - \<\vct{a_s}, \vct{a_r}\>\vct{a_s}) + \vct{x_k} - \vct{x} + (b_s - \<\vct{x_k}, \vct{a_s}\>)\vct{a_s}\|_2^2.
$$

Using that the minimizer of $\|\gamma \vct{w} + \vct{z}\|_2^2$ is $\gamma = -\frac{\<\vct{w},\vct{z}\>}{\|\vct{w}\|_2^2}$, we see that 
\begin{equation*}
\varepsilon_{opt} = \frac{-\<\vct{a_r} - \<\vct{a_s}, \vct{a_r}\>\vct{a_s} , \vct{x_k} - \vct{x} + (b_s - \<\vct{x_k},\vct{a_s}\>)\vct{a_s}\>}{(b_r - \<\vct{x_k}, \vct{a_r}\>)\|\vct{a_r} - \<\vct{a_s}, \vct{a_r}\>\vct{a_s}\|_2^2}.
\end{equation*}
Note that the unknown vector $\vct{x}$ appears in this expression only through its observable inner products, and so $\varepsilon_{opt}$ is computable.
After some algebra, one finds that the two-step procedure with this choice of $\varepsilon_{opt}$ can be re-written as follows~\cite{NW12:2srk}.

\begin{algorithm}[thb]
\caption{Two-subspace Kaczmarz}
	\label{alg:r2k}
\centering \fbox{
\begin{minipage}{.99\textwidth} 
\vspace{4pt}
\alginout{Matrix $\mtx{A}$, vector $\vct{b}$}
{An estimation $\vct{x_k}$ of the unique solution $\vct{x}$ to $\mtx{A}\vct{x} = \vct{b}$
}
\vspace{8pt}\hrule\vspace{8pt}

\begin{algtab*}
Set $\vct{x_0}$.
	 	\hfill \{ Trivial initial approximation \} \\		 
$k \leftarrow 0$ \\

\algrepeat
	$k \leftarrow k + 1$ \\
	Select $ r, s \in \{1, 2, \ldots, n\}$
		\hfill \{ Select two distinct rows of $\mtx{A}$ uniformly at random \} \\
		Set $ \mu_k \leftarrow \langle \vct{a_{r}}, \vct{a_{s}}\rangle$
		\hfill \{ Compute correlation \} \\
		Set $ \vct{y_k} \leftarrow \vct{x_{k-1}} + (b_s - \langle \vct{x_{k-1}}, \vct{a_s}\rangle)\vct{a_s}$
		\hfill \{ Perform intermediate projection \} \\
		Set $ \vct{v_k} \leftarrow \frac{\vct{a_r} - \mu_k \vct{a_s}}{\sqrt{1-|\mu_k|^2}}$
		\hfill \{ Compute vector orthogonal to $\vct{a_s}$ in direction of $\vct{a_r}$ \} \\
		Set $ \beta_k \leftarrow \frac{b_r - b_s\mu_k}{\sqrt{1-|\mu_k|^2}}$
		\hfill \{ Compute corresponding measurement \}\\
	$\vct{x_k} \leftarrow \vct{y_k} + (\beta_k - \langle \vct{y_k}, \vct{v_k}\rangle)\vct{v_k}$
		\hfill \{ Perform projection \} \\	
	
\end{algtab*}
\vspace{-10pt}
\end{minipage}}
\end{algorithm}


Our main result shows that the two-subspace Kaczmarz algorithm provides the same exponential convergence rate as the standard method in general, and substantially improved convergence when the rows of $\mtx{A}$ are coherent~\cite{NW12:2srk}.  Figure \ref{fig:A} plots two iterations of the one-subspace random Kaczmarz and compares this to a single iteration of the two-subspace Kaczmarz algorithm.

\begin{figure}[ht]
\centering
\subfigure[]{
   \includegraphics[scale=0.5] {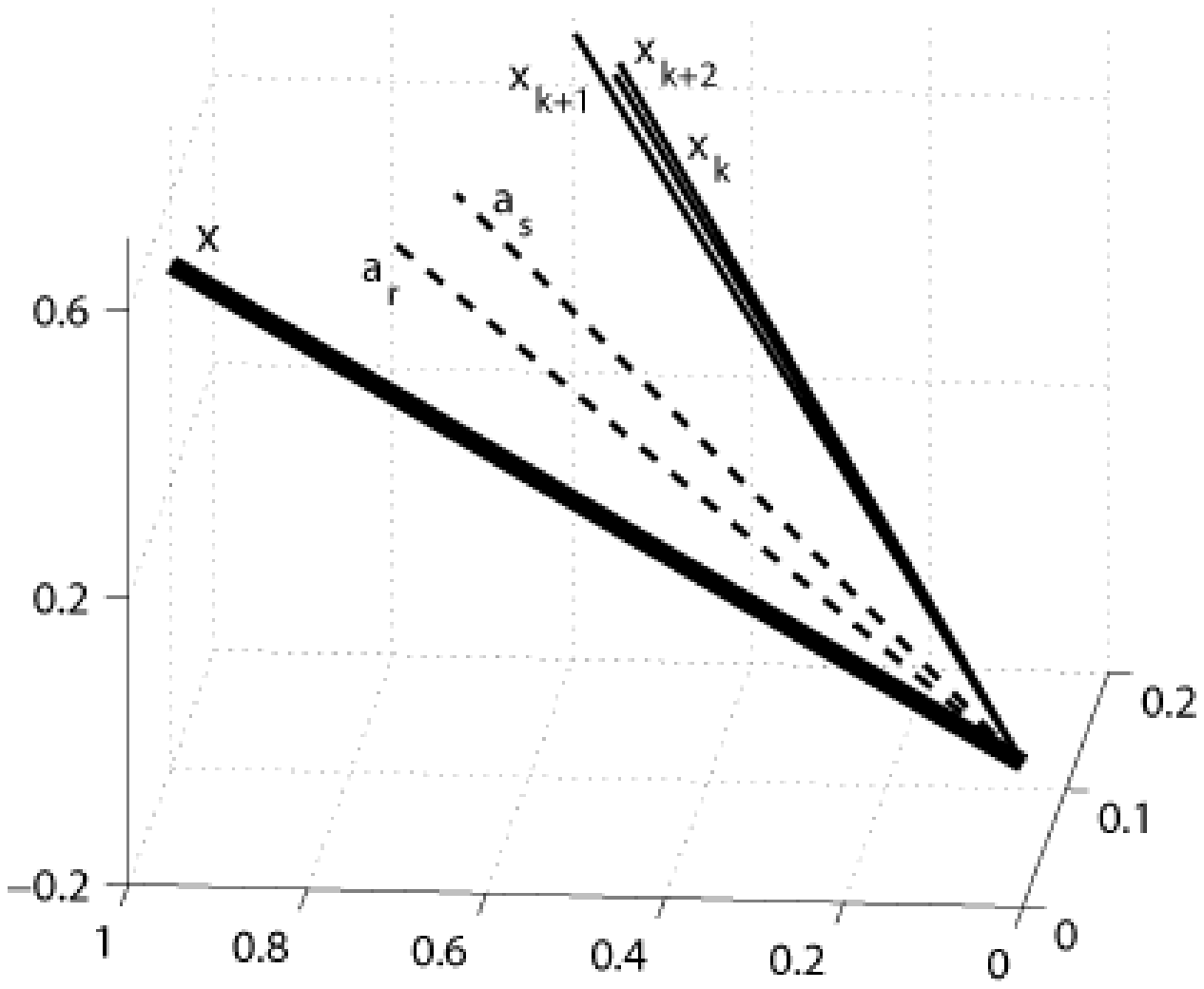}
 }
 \subfigure[]{
   \includegraphics[scale=0.5] {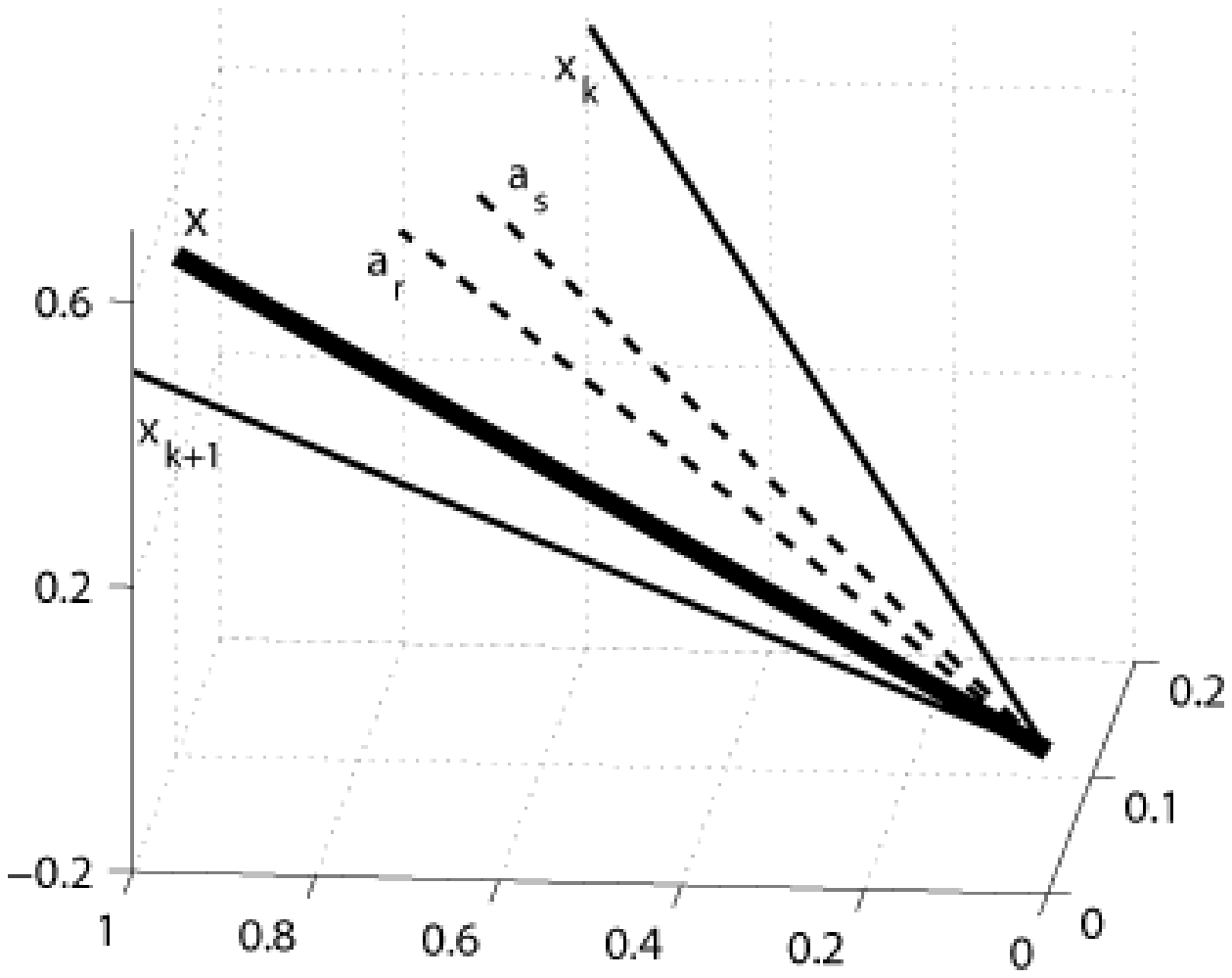}
 }
\caption{For coherent systems,  the one-subspace randomized Kaczmarz algorithm (a) converges more slowly than the two-subspace Kaczmarz algorithm (b).} \label{fig:A}
\end{figure}

%


\begin{theorem}\label{thm:main}
Let $\mtx{A}$ be a full-rank standardized matrix with $n$ columns and  $m > n$ rows and suppose $\mtx{A}\vct{x} = \vct{b}$.  Let $\vct{x_k}$ denote the estimation to the solution $\vct{x}$ in the $k$th iteration of the two-subspace Kaczmarz method.  Then
$$
\mathbb{E}\|\vct{x} - \vct{x_k}\|_2^2 \leq \left(\left(1 - \frac{1}{R}\right)^2 - \frac{D}{R}\right)^k\|\vct{x} - \vct{x}_{0}\|_2^2 ,
$$

where $D = \min\Big\{ \frac{\delta^2(1-\delta)}{1+\delta}, \frac{\Delta^2(1-\Delta)}{1+\Delta} \Big\} $, $\Delta$ and $\delta$ are the coherence parameters~\eqref{mus}, and $R = \| \mtx{A} \|_{F}^2 \|\mtx{A}^{-1}\|^2$ denotes the scaled condition number.
\end{theorem}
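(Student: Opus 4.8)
The plan is to prove a single-iteration contraction conditioned on the current iterate and then obtain the theorem by the tower property together with induction on $k$. Fix $\vct{x_{k-1}}$, write $\vct{e} = \vct{x_{k-1}} - \vct{x}$, and set $\alpha_r = \langle \vct{e},\vct{a_r}\rangle$, $\alpha_s = \langle \vct{e},\vct{a_s}\rangle$, $\mu = \langle \vct{a_r},\vct{a_s}\rangle$. First I would observe that, since $\{\vct{a_s},\vct{v_k}\}$ is an orthonormal basis of $\Span\{\vct{a_r},\vct{a_s}\}$ and the two projections in Algorithm~\ref{alg:r2k} are carried out along orthogonal directions, the updated iterate $\vct{x_k}$ is exactly the orthogonal projection of $\vct{x_{k-1}}$ onto the affine solution set $\{\vct{z}:\langle\vct{a_r},\vct{z}\rangle = b_r,\ \langle\vct{a_s},\vct{z}\rangle = b_s\}$, which contains $\vct{x}$. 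As $\vct{x_{k-1}}-\vct{x_k}$ is orthogonal to $\vct{x_k}-\vct{x}$, the Pythagorean theorem yields the exact identity
$$
\|\vct{x_k}-\vct{x}\|_2^2 = \|\vct{e}\|_2^2 - P,\qquad P = |\langle\vct{e},\vct{a_s}\rangle|^2 + |\langle\vct{e},\vct{v_k}\rangle|^2 = \frac{|\alpha_r|^2 + |\alpha_s|^2 - 2\,\mathrm{Re}(\overline{\mu}\,\alpha_r\overline{\alpha_s})}{1-|\mu|^2},
$$
where $P$ is the squared norm of the projection of $\vct{e}$ onto the two-dimensional span (the last form follows by expanding $\langle\vct{e},\vct{v_k}\rangle$, tracking conjugates appropriately).

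Because $1 - \big((1-\tfrac1R)^2 - \tfrac DR\big) = \tfrac{2+D}{R} - \tfrac1{R^2}$, it then suffices to show $\E[P]\ge\big(\tfrac{2+D}{R}-\tfrac1{R^2}\big)\|\vct{e}\|_2^2$, with the expectation taken over the uniform random distinct pair $(r,s)$. I would evaluate this through the nested conditioning built into the algorithm. The intermediate projection onto $\vct{a_s}$ reduces the error by $|\alpha_s|^2$, and $\E_s|\alpha_s|^2 = \tfrac1m\sum_i|\langle\vct{e},\vct{a_i}\rangle|^2 = \tfrac1m\|\mtx{A}\vct{e}\|_2^2\ge\tfrac1R\|\vct{e}\|_2^2$, using $\|\mtx{A}\vct{e}\|_2^2\ge\|\vct{e}\|_2^2/\|\mtx{A}^{-1}\|^2$ and $\|\mtx{A}\|_F^2 = m$; the final projection onto the orthonormalized direction $\vct{v_k}$ removes a further $|\langle\vct{e},\vct{v_k}\rangle|^2$. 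Combining the two reductions multiplicatively is precisely what reproduces the $(1-\tfrac1R)^2$ shape, including its $+\tfrac1{R^2}$ cross term, so the task reduces to showing that the expected second (orthogonalized) decrease exceeds what two \emph{independent} single-row Kaczmarz steps would remove, the surplus being at least $\tfrac DR\|\vct{e}\|_2^2$.

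The main obstacle is the indefinite cross term $\mathrm{Re}(\overline{\mu}\,\alpha_r\overline{\alpha_s})$: for an individual pair it can make $P$ \emph{smaller} than $|\alpha_r|^2+|\alpha_s|^2$, so no pointwise gain is available and the improvement must be extracted in expectation, using the interplay between the off-diagonal correlations and the spectral bound $\|\mtx{A}\vct{e}\|_2^2\ge\|\vct{e}\|_2^2/\|\mtx{A}^{-1}\|^2$ (the simplex frame, where $\sum_i\alpha_i = 0$, is the instructive extremal case). I would isolate the coherence contribution by writing $|\langle\vct{e},\vct{v_k}\rangle|^2 = |\alpha_r - \mu\alpha_s|^2 + \tfrac{|\mu|^2}{1-|\mu|^2}|\alpha_r-\mu\alpha_s|^2$ and then bounding the expected surplus over the two-independent-step value from below by a $g(|\mu_{rs}|)$-weighted quantity whose expectation is controlled by $\tfrac1R\|\vct{e}\|_2^2$, where $g(t)=\tfrac{t^2(1-t)}{1+t}$. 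Since the rows satisfy $\delta\le|\mu_{rs}|\le\Delta$ and $g$ vanishes at $t=0$ and $t=1$ and is unimodal in between, its minimum over $[\delta,\Delta]$ is attained at an endpoint, giving $g(|\mu_{rs}|)\ge\min\{g(\delta),g(\Delta)\}=D$ and hence a surplus of at least $\tfrac DR\|\vct{e}\|_2^2$. Establishing this last inequality rigorously—replacing the indefinite cross term by the coherence-uniform estimate $g(|\mu|)\ge D$ while keeping the spectral bound intact, and handling the complex conjugates correctly—is where essentially all the difficulty lies; assembling the one-step contraction and iterating it over $k$ are then routine.
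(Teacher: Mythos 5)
Your outline reproduces the skeleton of the paper's own argument: the exact Pythagorean identity $\|\vct{x}-\vct{x_k}\|_2^2=\|\vct{e}\|_2^2-|\langle\vct{e},\vct{a_s}\rangle|^2-|\langle\vct{e},\vct{v_k}\rangle|^2$ for $\vct{e}=\vct{x}-\vct{x_{k-1}}$, the comparison against two standard Kaczmarz steps to produce the $\left(1-\tfrac1R\right)^2$ baseline, the function $g(t)=\tfrac{t^2(1-t)}{1+t}$ whose endpoint minimum over $[\delta,\Delta]$ is $D$, and the spectral estimate $\sum_r\langle\vct{e},\vct{a_r}\rangle^2\ge\|\vct{e}\|_2^2/\|\mtx{A}^{-1}\|^2$. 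But the one step you defer --- ``bounding the expected surplus over the two-independent-step value from below by a $g(|\mu_{rs}|)$-weighted quantity'' --- is not a detail to be filled in later; it is the entire content of the paper's Lemma~\ref{lem:main}, and you concede this when you write that establishing it is ``where essentially all the difficulty lies.'' As written, the proposal is a correct reduction plus an unproven claim, so it is not yet a proof.

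Concretely, here is what is missing. For an ordered pair $(r,s)$ the surplus of the two-subspace iteration over two standard iterations is $\left|\theta_{r,s}\langle\vct{e},\vct{a_r}\rangle-\pi_{r,s}\langle\vct{e},\vct{a_s}\rangle\right|^2$ with $\theta_{r,s}=\mu_{r,s}^2/\sqrt{1-\mu_{r,s}^2}$ and $\pi_{r,s}=\mu_{r,s}/\sqrt{1-\mu_{r,s}^2}$. This square can vanish for an individual pair (take $\langle\vct{e},\vct{a_s}\rangle=\mu_{r,s}\langle\vct{e},\vct{a_r}\rangle$), so no per-pair bound of the form $g(|\mu_{r,s}|)\left(\langle\vct{e},\vct{a_r}\rangle^2+\langle\vct{e},\vct{a_s}\rangle^2\right)$ can hold, which is why your expectation-based plan stalls exactly where you say it does. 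The paper's resolution is a symmetrization: since $(r,s)$ and $(s,r)$ are equally likely, group them in the sum over $r\ne s$ and apply the elementary inequality
$$
(\theta u-\pi v)^2+(\theta v-\pi u)^2\;\ge\;(|\pi|-|\theta|)^2\,(u^2+v^2),
$$
valid for all real $\theta,\pi,u,v$. Because $(|\pi_{r,s}|-|\theta_{r,s}|)^2=\frac{(|\mu_{r,s}|-\mu_{r,s}^2)^2}{1-\mu_{r,s}^2}=g(|\mu_{r,s}|)$, the paired surplus dominates $g(|\mu_{r,s}|)\left(\langle\vct{e},\vct{a_r}\rangle^2+\langle\vct{e},\vct{a_s}\rangle^2\right)$, after which $g\ge D$ and the spectral bound deliver the $\tfrac{D}{R}\|\vct{e}\|_2^2$ surplus, and iterating finishes the theorem. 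This pairing device is the missing idea. A secondary gap: your claim that the two reductions ``combine multiplicatively'' to reproduce $\left(1-\tfrac1R\right)^2$ also needs an argument, since the two reductions in $P$ are additive; it can be repaired by conditioning on $s$, noting $|\langle\vct{e},\vct{v_k}\rangle|^2\ge\left|\left\langle\vct{e}-\langle\vct{e},\vct{a_s}\rangle\vct{a_s},\,\vct{a_r}\right\rangle\right|^2$, and applying the spectral bound to the projected error, or, as the paper does, by citing the standard two-step bound and observing that sampling without replacement only helps.
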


\begin{remarks}
{\bfseries 1. }When $\Delta = 1$ or $\delta = 0$ we recover the same convergence rate as provided for the standard Kaczmarz method~\eqref{SV} since the two-subspace method utilizes two projections per iteration. 

{\bfseries 2. }The bound presented in Theorem~\ref{thm:main} is a pessimistic bound.  Even when $\Delta = 1$ or $\delta = 0$, the two-subspace method improves on the standard method if  any rows of $\mtx{A}$ are highly correlated (but not equal).  This is evident in the proof of Theorem~\ref{thm:main} in Section~\ref{sec:proofs} but we present this bound for simplicity.  See also Section~\ref{sec:mods} for more details on improved convergence bounds.

\end{remarks}

Figure~\ref{fig:D} shows the value of $D$ of Theorem \ref{thm:main} for various values of $\Delta$ and $\delta$.  This demonstrates that in the best case (when $\delta \approx \Delta \approx 0.62$), the convergence rate is improved by at least a factor of 0.1.  

  \begin{figure}[h!]
\begin{center}
\includegraphics[width=3in]{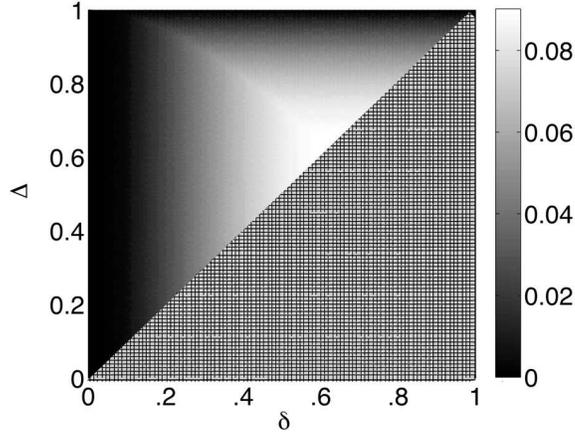}    
\end{center}
\caption{A plot of the improved convergence factor $D$ as a function of the coherence parameters $\delta$ and $\Delta \geq \delta$. }\label{fig:D}
\end{figure}

\subsection{Organization} The remainder of the report is organized as follows. In Section~\ref{sec:proofs} we state and prove the main lemmas which serve as the proof of Theorem~\ref{thm:main}.  Section~\ref{sec:noise} discusses the two-subspace Kaczmarz method in the presence of noise and shows that in this case the method exhibits exponential convergence to an error threshold.  Section~\ref{sec:mods} presents further modifications of the two-subspace Kaczmarz method which provide even more improvements on the provable convergence bounds.  A discussion of these methods is provided in Section~\ref{sec:discuss}.  We conclude with numerical experiments demonstrating the improvements from our method in Section~\ref{sec:numerics}.

\section{Main Results} \label{sec:proofs}

We now present the proof of Theorem~\ref{thm:main}.  We first derive a bound for the expected progress made in a single iteration.  Since the two row indices are chosen independently at each iteration, we will be able to apply the bound recursively to obtain the desired overall expected convergence rate.  

Our first lemma shows that the expected estimation error in a single iteration of the two-subspace Kaczmarz method is decreased by a factor strictly less than that of the standard randomized method.

\begin{lemma}\label{lem:main}
 Let $\vct{x_{k}}$ denote the estimation to the solution of $\mtx{A}\vct{x} = \vct{b}$ in the $k$th iteration of the two-subspace Kaczmarz method.  Denote the rows of $\mtx{A}$ by $\vct{a}_1, \vct{a}_2, \ldots \vct{a}_m$.  Then we have the following bound,
$$
\mathbb{E}\|\vct{x} - \vct{x_{k}}\|_2^2 \leq \left(1 - \frac{1}{R}\right)^2\|\vct{x} - \vct{x_{k-1}}\|_2^2 - \frac{1}{m^2-m}\sum_{r < s} C_{r,s}^2\left(\langle \vct{x} - \vct{x_{k-1}}, \vct{a_{r}}\rangle^2 + \langle \vct{x} - \vct{x_{k-1}}, \vct{a_{s}}\rangle^2\right),
$$

where $C_{r,s} = \frac{|\mu_{r,s}|-\mu_{r,s}^2}{\sqrt{1-\mu_{r,s}^2}}$, $\mu_{r,s} = \<\vct{a_r}, \vct{a_s}\>$, and $R = \|\mtx{A}^{-1}\|^2\|\mtx{A}\|_F^2$ denotes the scaled condition number.

\end{lemma}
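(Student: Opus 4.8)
The plan is to analyze a single iteration exactly, then take expectations in two stages so that the Strohmer--Vershynin estimate \eqref{SV} can be applied \emph{twice}, producing the factor $(1-1/R)^2$ while a nonnegative coherence correction is peeled off. Throughout write $\vct{e} = \vct{x} - \vct{x_{k-1}}$ and abbreviate $\alpha_j = \langle \vct{e}, \vct{a_j}\rangle$.

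First I would record the exact per-step decrement. Since $\mtx{A}$ is standardized and $\vct{v_k}$ is a unit vector orthogonal to $\vct{a_s}$, the pair $\{\vct{a_s},\vct{v_k}\}$ is orthonormal, so the two successive projections of the algorithm produce exactly the orthogonal projection of $\vct{x_{k-1}}$ onto $\{\vct{z}: \langle\vct{a_s},\vct{z}\rangle=b_s,\ \langle\vct{v_k},\vct{z}\rangle=\beta_k\}$. Using $b_s=\langle\vct{x},\vct{a_s}\rangle$ and $\beta_k=\langle\vct{x},\vct{v_k}\rangle$ (immediate from the definitions of $\vct{v_k}$ and $\beta_k$), the intermediate error is $\vct{e}'=\vct{e}-\alpha_s\vct{a_s}$, and since $\langle\vct{e}',\vct{v_k}\rangle=\langle\vct{e},\vct{v_k}\rangle=(\alpha_r-\mu_{r,s}\alpha_s)/\sqrt{1-\mu_{r,s}^2}$, the Pythagorean identity gives
\[
\|\vct{x}-\vct{x_k}\|_2^2 = \|\vct{e}\|_2^2 - \alpha_s^2 - \frac{(\alpha_r-\mu_{r,s}\alpha_s)^2}{1-\mu_{r,s}^2}.
\]
Next I would take the expectation conditioning first on the intermediate row $s$ and averaging over $r\neq s$. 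The first term averages over $s$ to $\tfrac1m\|\mtx{A}\vct{e}\|_2^2\geq \tfrac1R\|\vct{e}\|_2^2$, giving one factor $(1-1/R)$. For the second projection the key point is that $\langle\vct{e}',\vct{a_r}\rangle=\alpha_r-\mu_{r,s}\alpha_s$ and the omitted diagonal term $r=s$ equals $\langle\vct{e}',\vct{a_s}\rangle=0$, so $\sum_{r\neq s}\langle\vct{e}',\vct{a_r}\rangle^2=\|\mtx{A}\vct{e}'\|_2^2\geq\tfrac{m}{R}\|\vct{e}'\|_2^2$. Writing $\tfrac{1}{1-\mu^2}=1+\tfrac{\mu^2}{1-\mu^2}$ and keeping the excess separate, then averaging over $r$ and over $s$ (and using $\mathbb{E}_s\|\vct{e}'\|_2^2\leq(1-1/R)\|\vct{e}\|_2^2$), yields the leading factor $(1-1/R)^2\|\vct{e}\|_2^2$ together with a nonnegative remainder
\[
\frac{1}{m^2-m}\sum_{r\neq s}\frac{\mu_{r,s}^2}{1-\mu_{r,s}^2}\,(\alpha_r-\mu_{r,s}\alpha_s)^2 .
\]

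Finally I would turn this remainder into the stated gain. Writing $\mu=\mu_{r,s}$ and grouping the ordered pairs $(r,s)$ and $(s,r)$, which share the same $\mu$, one expands $(\alpha_r-\mu\alpha_s)^2+(\alpha_s-\mu\alpha_r)^2=(1+\mu^2)(\alpha_r^2+\alpha_s^2)-4\mu\alpha_r\alpha_s$; bounding the cross term by $-4\mu\alpha_r\alpha_s\geq -2|\mu|(\alpha_r^2+\alpha_s^2)$ via AM--GM leaves the factor $(1-|\mu|)^2$, and the identity $\frac{\mu^2(1-|\mu|)^2}{1-\mu^2}=\frac{\mu^2(1-|\mu|)}{1+|\mu|}=C_{r,s}^2$ shows each symmetrized pair contributes at least $C_{r,s}^2(\alpha_r^2+\alpha_s^2)$. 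Summing produces exactly the $-\frac{1}{m^2-m}\sum_{r<s}C_{r,s}^2(\alpha_r^2+\alpha_s^2)$ term of the lemma.

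The main obstacle is securing the sharp leading factor $(1-1/R)^2$ rather than the naive $1-2/R$: a crude argument that merely sums $\alpha_r^2+\alpha_s^2$ over pairs is in fact \emph{incompatible} with the claimed gain, so one must genuinely apply the condition-number estimate to the reduced error $\vct{e}'$ after the first projection, which is what the two-stage conditioning together with the vanishing diagonal term accomplishes. A secondary subtlety is that the per-pair coherence correction has indefinite sign because of the cross term $\alpha_r\alpha_s$; this is what forces the symmetrization over $(r,s)$ and $(s,r)$ before applying AM--GM, yielding the clean, sign-free bound $C_{r,s}^2(\alpha_r^2+\alpha_s^2)$.
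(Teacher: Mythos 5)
Your proof is correct, and while it shares the paper's skeleton (the exact Pythagorean decrement for one two-subspace step, the same coherence correction term, and the same symmetrization bound), it obtains the leading factor $(1-1/R)^2$ by a genuinely different and in fact more self-contained argument. The paper proves the identity \eqref{eq:stand}: the expected two-subspace error equals the expected error after two steps of standard randomized Kaczmarz using the same distinct rows $r,s$ (the iterates $\vct{z}, \vct{z'}$ of \eqref{eq:z}), minus exactly the correction term you also derive; it then bounds $\mathbb{E}\|\vct{x}-\vct{z'}\|_2^2 \le (1-1/R)^2\|\vct{x}-\vct{x_{k-1}}\|_2^2$ by citing \eqref{SV} together with the unproven (though plausible) assertion that sampling the two rows without replacement ``can only speed the convergence.'' You avoid that assertion altogether: conditioning on $s$ and averaging over $r \ne s$, the intermediate error $\vct{e}'$ satisfies $\langle \vct{e}', \vct{a_s}\rangle = 0$, so the excluded diagonal term vanishes, giving $\frac{1}{m-1}\sum_{r\ne s}\langle \vct{e}',\vct{a_r}\rangle^2 = \frac{1}{m-1}\|\mtx{A}\vct{e}'\|_2^2 \ge \frac{1}{m}\|\mtx{A}\vct{e}'\|_2^2 \ge \frac{1}{R}\|\vct{e}'\|_2^2$, and a second application of the spectral bound to $\mathbb{E}_s\|\vct{e}'\|_2^2 \le (1-1/R)\|\vct{e}\|_2^2$ produces $(1-1/R)^2$ rigorously. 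So your route actually proves the step the paper only asserts; the paper's route, in exchange, yields the clean structural identity ``two-subspace error equals standard two-step error minus a nonnegative gain,'' which it reuses later (see \eqref{eq:restart} in Section~\ref{sec:mods}). The closing coherence estimate is the same in both proofs: your expansion plus AM--GM over the symmetrized ordered pairs $(r,s)$ and $(s,r)$ is precisely a proof of the paper's inequality $(\theta u - \pi v)^2 + (\theta v - \pi u)^2 \ge (|\pi|-|\theta|)^2(u^2+v^2)$ specialized to $\theta_{r,s} = \mu_{r,s}^2/\sqrt{1-\mu_{r,s}^2}$ and $\pi_{r,s} = \mu_{r,s}/\sqrt{1-\mu_{r,s}^2}$, and your identity $\mu_{r,s}^2(1-|\mu_{r,s}|)^2/(1-\mu_{r,s}^2) = C_{r,s}^2$ recovers the paper's constant exactly.
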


\begin{proof}
  We fix an iteration $k$ and for convenience refer to $\vct{v}_k$, $\mu_k$, and $\vct{y}_k$ as $\vct{v}$, $\mu$, and $\vct{y}$, respectively.  We will also denote $\gamma = \langle \vct{a_{r}}, \vct{v}\rangle$.  
  
 First, observe that by the definitions of $\vct{v}$ and $\vct{x_{k}}$ we have
$$
\vct{x_{k}} = \vct{x_{k-1}} + \langle \vct{x} - \vct{x_{k-1}}, \vct{a_{s}}\rangle \vct{a_{s}} + \langle \vct{x} - \vct{x_{k-1}}, \vct{v}\rangle \vct{v}.
$$
Since $\vct{a_{s}}$ and $\vct{v}$ are orthonormal, this gives the estimate
\begin{equation}\label{eq:new}
\|\vct{x} - \vct{x_{k}}\|_2^2 = \|\vct{x} - \vct{x_{k-1}}\|_2^2 - |\langle \vct{x} - \vct{x_{k-1}}, \vct{a_{s}}\rangle|^2 - |\langle \vct{x} - \vct{x_{k-1}}, \vct{v}\rangle|^2
\end{equation}

We wish to compare this error with the error from the standard randomized Kaczmarz method.  Since we utilize two rows per iteration in the two-subspace Kaczmarz method, we compare its error with the error from two iterations of the standard method.   Let $\vct{z}$ and $\vct{z'}$ be two subsequent estimates in the standard method following the estimate  $\vct{x_{k-1}}$, and assume $\vct{z} \neq \vct{z'}$.  That is,
\begin{equation}\label{eq:z}
\vct{z} = \vct{x_{k-1}} +  (b_r - \langle \vct{x_{k-1}}, \vct{a_{r}}\rangle)\vct{a_{r}} \quad\text{and}\quad
\vct{z'} = \vct{z} +  (b_s - \langle \vct{z}, \vct{a_{s}}\rangle)\vct{a_{s}}.
\end{equation} 

Recalling the definitions of $\vct{v}$, $\mu$ and $\gamma$, we have
\begin{equation}\label{eq:a}
\vct{a_{r}} = \mu\vct{a_{s}} + \gamma \vct{v}\quad\text{with}\quad \mu^2 + \gamma^2 = 1.
\end{equation}

Substituting this into~\eqref{eq:z} yields
$$
\vct{z} = \vct{x_{k-1}} + \mu\langle \vct{x} - \vct{x_{k-1}}, \vct{a_{r}}\rangle \vct{a_{s}} + \gamma\langle\vct{x} - \vct{x_{k-1}}, \vct{a_{r}}\rangle\vct{v}.
$$

Now substituting this into~\eqref{eq:z} and taking the orthogonality of $\vct{a_{s}}$ and $\vct{v}$ into account,
$$
\vct{z}' = \vct{x_{k-1}} + \langle \vct{x} - \vct{x_{k-1}}, \vct{a_{s}}\rangle \vct{a_{s}} + \gamma\langle\vct{x} - \vct{x_{k-1}}, \vct{a_{r}}\rangle\vct{v}.
$$

For convenience, let $\vct{e_{k-1}} = \vct{x} - \vct{x_{k-1}}$ denote the error in the $(k-1)$st iteration of two-subspace Kaczmarz. Then we have
\begin{align*}
\|\vct{x} - \vct{z'}\|_2^2 
&= \|\vct{e_{k-1}} - \langle \vct{e_{k-1}}, \vct{a_{s}}\rangle \vct{a_{s}} - \gamma\langle\vct{e_{k-1}}, \vct{a_{r}}\rangle\vct{v}\|_2^2\\
&= \|\vct{e_{k-1}} - \langle \vct{e_{k-1}}, \vct{a_{s}}\rangle \vct{a_{s}} - \langle \vct{e_{k-1}}, \vct{v}\rangle\vct{v} - (\gamma\langle\vct{e_{k-1}}, \vct{a_{r}}\rangle - \langle \vct{e_{k-1}}, \vct{v}\rangle)\vct{v}\|_2^2\\
&= \|\vct{e_{k-1}}\|_2^2 - |\langle \vct{e_{k-1}}, \vct{a_{s}}\rangle|^2 - |\langle \vct{e_{k-1}}, \vct{v}\rangle|^2 + |\gamma\langle\vct{e_{k-1}}, \vct{a_{r}}\rangle - \langle \vct{e_{k-1}}, \vct{v}\rangle|^2.
\end{align*}
The third equality follows from the orthonormality of $\vct{a_{s}}$ and $\vct{v}$.
We now expand the last term,
\begin{align*}
|\gamma\langle\vct{e_{k-1}}, \vct{a_r}\rangle - \langle \vct{e_{k-1}}, \vct{v}\rangle|^2 &=
|\gamma\langle\vct{e_{k-1}}, \mu\vct{a_{s}} + \gamma\vct{v}\rangle - \langle \vct{e_{k-1}}, \vct{v}\rangle|^2\\
&=  |\gamma^2\langle\vct{e_{k-1}}, \vct{v}\rangle + \gamma\mu\langle \vct{e_{k-1}}, \vct{a_{s}} \rangle- \langle \vct{e_{k-1}}, \vct{v}\rangle|^2\\
&= |\mu^2\langle\vct{e_{k-1}}, \vct{v}\rangle - \gamma\mu\langle \vct{e_{k-1}}, \vct{a_{s}} \rangle|^2.
\end{align*}
This gives
\begin{align*}
\|\vct{x} - \vct{z'}\|_2^2 
&= \|\vct{e_{k-1}}\|_2^2 - |\langle \vct{e_{k-1}}, \vct{a_{s}}\rangle|^2 - |\langle \vct{e_{k-1}}, \vct{v}\rangle|^2 +|\mu^2\langle\vct{e_{k-1}}, \vct{v}\rangle - \gamma\mu\langle \vct{e_{k-1}}, \vct{a_{s}} \rangle|^2.
\end{align*}

Combining this identity with~\eqref{eq:new}, we now relate the expected error in the two-subspace Kaczmarz algorithm, $\mathbb{E}\|\vct{x} - \vct{x_{k}}\|_2^2$ to the expected error of the standard method, $\mathbb{E}\|\vct{x} - \vct{z'}\|_2^2$ as follows:
\begin{equation}\label{eq:stand}
\mathbb{E}\|\vct{x} - \vct{x_{k}}\|_2^2= \mathbb{E}\|\vct{x} - \vct{z'}\|_2^2 - \mathbb{E}|\mu^2\langle\vct{e_{k-1}}, \vct{v}\rangle - \gamma\mu\langle \vct{e_{k-1}}, \vct{a_{s}} \rangle|^2.
\end{equation}

It thus remains to analyze the last term. Since we select the two rows $r$ and $s$ independently from the uniform distribution over pairs of distinct rows, the expected error is just the average of the error over all $m^2 - m$ ordered choices ${r,s}$.  To this end we introduce the notation $\mu_{r,s} = \langle \vct{a_{r}}, \vct{a_{s}}\rangle$.  Then by definitions of $\vct{v}$, $\mu$ and $\gamma$,  
\begin{align*}
\mathbb{E}|\mu^2\langle\vct{e_{k-1}}&, \vct{v}\rangle - \gamma\mu\langle \vct{e_{k-1}}, \vct{a_{s}} \rangle|^2\\
&= \frac{1}{m^2-m}\sum_{r\ne s} \left|\frac{\mu_{r,s}^2}{\sqrt{1-\mu_{r,s}^2}} (\langle \vct{e_{k-1}}, \vct{a_{r}}\rangle - \mu_{r,s}\langle \vct{e_{k-1}}, \vct{a_{s}}\rangle) - \mu_{r,s}\sqrt{1 - \mu_{r,s}^2}\langle \vct{e_{k-1}}, \vct{a_{s}}\rangle  \right|^2\\
&= \frac{1}{m^2-m}\sum_{r\ne s} \left|\frac{\mu_{r,s}^2}{\sqrt{1-\mu_{r,s}^2}} \langle \vct{e_{k-1}}, \vct{a_{r}}\rangle - \left(\frac{\mu_{r,s}^3}{\sqrt{1-\mu_{r,s}^2}} + \mu_{r,s}\sqrt{1 - \mu_{r,s}^2} \right)\langle \vct{e_{k-1}}, \vct{a_{s}}\rangle\right|^2\\
&= \frac{1}{m^2-m}\sum_{r\ne s} \left|\frac{\mu_{r,s}^2}{\sqrt{1-\mu_{r,s}^2}} \langle \vct{e_{k-1}}, \vct{a_{r}}\rangle - \left(\frac{\mu_{r,s}}{\sqrt{1-\mu_{r,s}^2}} \right)\langle \vct{e_{k-1}}, \vct{a_{s}}\rangle\right|^2.\\
\end{align*}


We now recall that for any $\theta,\pi,u,$ and $v$,
$$
(\theta u - \pi v)^2 + (\theta v - \pi u)^2 \geq (|\pi| - |\theta|)^2(u^2+v^2). 
$$
  
  Setting $\theta_{r,s} = \frac{\mu_{r,s}^2}{\sqrt{1-\mu_{r,s}^2}}$ and $\pi_{r,s} = \frac{\mu_{r,s}}{\sqrt{1-\mu_{r,s}^2}} $, we have by rearranging terms in the symmetric sum,

\begin{align}\label{eq:bound}
\mathbb{E}|\mu^2\langle\vct{e_{k-1}}&, \vct{\theta}\rangle - \gamma\mu\langle \vct{e_{k-1}}, \vct{a_{s}} \rangle|^2\notag\\
&= \frac{1}{m^2-m}\sum_{r\ne s} \left|\theta_{r,s} \langle \vct{e_{k-1}}, \vct{a_{r}}\rangle - \pi_{r,s}\langle \vct{e_{k-1}}, \vct{a_{s}}\rangle\right|^2\notag\\
&= \frac{1}{m^2-m}\sum_{r < s} \left|\theta_{r,s} \langle \vct{e_{k-1}}, \vct{a_{r}}\rangle - \pi_{r,s}\langle \vct{e_{k-1}}, \vct{a_{s}}\rangle\right|^2 + \left|\theta_{r,s} \langle \vct{e_{k-1}}, \vct{a_{s}}\rangle - \pi_{r,s}\langle \vct{e_{k-1}}, \vct{a_{r}}\rangle\right|^2\notag\\
&\geq \frac{1}{m^2-m}\sum_{r < s} (|\pi_{r,s}| - |\theta_{r,s}|)^2\left((\langle \vct{e_{k-1}}, \vct{a_{r}}\rangle)^2 + (\langle \vct{e_{k-1}}, \vct{a_{s}}\rangle)^2\right) \notag\\
&= \frac{1}{m^2-m}\sum_{r < s} \Big( \frac{|\mu_{r,s}|-\mu_{r,s}^2}{\sqrt{1-\mu_{r,s}^2}} \Big)^2\left(\langle \vct{e_{k-1}}, \vct{a_{r}} \rangle)^2 + (\langle \vct{e_{k-1}}, \vct{a_{s}}\rangle)^2\right).
\end{align}

Since selecting two rows without replacement (i.e. guaranteeing not to select the same row back to back) can only speed the convergence, we have from~\eqref{SV} that the error from the standard randomized Kaczmarz method satisfies
$$
\E\|\vct{x}-\vct{z}'\|_2^2 \leq (1 - 1/R)^2\|\vct{x} - \vct{x_{k-1}}\|_2^2.
$$

Combining this with~\eqref{eq:stand} and~\eqref{eq:bound} yields the desired result.

\end{proof}

Although the result of Lemma~\ref{lem:main} is tighter, using the coherence parameters $\delta$ and $\Delta$ of~\eqref{mus} allows us to present the following looser but simpler result. 

\begin{lemma}\label{lem:simple}
 Let $\vct{x_{k}}$ denote the estimation to $\mtx{A}\vct{x} = \vct{b}$ in the $k$th iteration of the two-subspace Kaczmarz method.  Denote the rows of $\mtx{A}$ by $\vct{a}_1, \vct{a}_2, \ldots \vct{a}_m$.  Then 
$$
\mathbb{E}\|\vct{x} - \vct{x_{k}}\|_2^2 \leq \left(\left(1 - \frac{1}{R}\right)^2 - \frac{D}{R} \right)\|\vct{x} - \vct{x_{k-1}}\|_2^2 ,
$$

where $D = \min\Big\{ \frac{\delta^2(1-\delta)}{1+\delta}, \frac{\Delta^2(1-\Delta)}{1+\Delta} \Big\}$, $\delta$ and $\Delta$ are the coherence parameters as in~\eqref{mus}, and $R = \|\mtx{A}^{-1}\|^2\|\mtx{A}\|_F^2$ denotes the scaled condition number.
\end{lemma}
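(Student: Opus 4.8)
The plan is to start from the sharp bound of Lemma~\ref{lem:main} and replace its subtracted sum by a clean multiple of $\|\vct{x}-\vct{x_{k-1}}\|_2^2$. Writing $\vct{e} = \vct{x} - \vct{x_{k-1}}$, it suffices to produce a lower bound of the form
$$
\frac{1}{m^2-m}\sum_{r<s} C_{r,s}^2\left(\langle \vct{e},\vct{a_r}\rangle^2 + \langle \vct{e},\vct{a_s}\rangle^2\right) \;\geq\; \frac{D}{R}\,\|\vct{e}\|_2^2 ,
$$
since subtracting this from the $(1-1/R)^2\|\vct{e}\|_2^2$ term of Lemma~\ref{lem:main} immediately yields the claimed rate. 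The natural route is to (i) lower bound every coefficient $C_{r,s}^2$ by the constant $D$, (ii) evaluate the resulting index sum exactly, and (iii) convert $\|\mtx{A}\vct{e}\|_2^2$ into $\|\vct{e}\|_2^2$ using the definition of $\|\mtx{A}^{-1}\|$.

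For step (i) I would first simplify the coefficient. Using $\mu^2 = |\mu|^2$ and $1-\mu^2 = (1-|\mu|)(1+|\mu|)$, one rewrites
$$
C_{r,s}^2 = \frac{(|\mu_{r,s}|-\mu_{r,s}^2)^2}{1-\mu_{r,s}^2} = \frac{\mu_{r,s}^2(1-|\mu_{r,s}|)}{1+|\mu_{r,s}|} = f(|\mu_{r,s}|), \qquad f(t) := \frac{t^2(1-t)}{1+t}.
$$
Because $\mtx{A}$ is standardized, the coherence definitions~\eqref{mus} guarantee $\delta \le |\mu_{r,s}| \le \Delta$ for every pair $r\ne s$, so it remains to minimize $f$ over $[\delta,\Delta]$. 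The key analytic fact is that $f$ is \emph{unimodal} on $[0,1]$: a short computation gives $f'(t) = 2t(1-t-t^2)/(1+t)^2$, which is positive for $t < (\sqrt5-1)/2$ and negative thereafter (matching the observed optimum $\delta\approx\Delta\approx0.62$). Hence $f$ rises to a single interior peak and then falls, so on any subinterval its minimum is attained at an endpoint, giving $C_{r,s}^2 = f(|\mu_{r,s}|) \ge \min\{f(\delta),f(\Delta)\} = D$. I expect this endpoint-minimization step to be the main (indeed essentially the only) obstacle, since it is where the specific closed form of $D$ is forced; everything else is bookkeeping.

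For steps (ii) and (iii), replacing each $C_{r,s}^2$ by $D$ and noting that each fixed index $j$ occurs in exactly $m-1$ unordered pairs gives
$$
\sum_{r<s}\left(\langle \vct{e},\vct{a_r}\rangle^2 + \langle \vct{e},\vct{a_s}\rangle^2\right) = (m-1)\sum_{j=1}^{m}\langle \vct{e},\vct{a_j}\rangle^2 = (m-1)\,\|\mtx{A}\vct{e}\|_2^2 ,
$$
so the correction term is at least $\tfrac{D(m-1)}{m^2-m}\|\mtx{A}\vct{e}\|_2^2 = \tfrac{D}{m}\|\mtx{A}\vct{e}\|_2^2$. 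Finally, the definition of $\|\mtx{A}^{-1}\|$ gives $\|\mtx{A}\vct{e}\|_2^2 \ge \|\vct{e}\|_2^2/\|\mtx{A}^{-1}\|^2$, and since $\mtx{A}$ is standardized we have $\|\mtx{A}\|_F^2 = m$, whence $1/\|\mtx{A}^{-1}\|^2 = m/R$. Combining these bounds yields a correction term of at least $\tfrac{D}{m}\cdot\tfrac{m}{R}\|\vct{e}\|_2^2 = \tfrac{D}{R}\|\vct{e}\|_2^2$, which plugged into Lemma~\ref{lem:main} completes the proof.
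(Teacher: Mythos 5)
Your proposal is correct and follows essentially the same route as the paper's own proof: lower-bound each $C_{r,s}^2$ by $D$, collapse the symmetric pair sum to $(m-1)\sum_{j}\langle \vct{e},\vct{a_j}\rangle^2$, and convert to $\|\vct{e}\|_2^2$ via $\sum_{j}\langle \vct{e},\vct{a_j}\rangle^2 \geq \|\vct{e}\|_2^2/\|\mtx{A}^{-1}\|^2$ together with $\|\mtx{A}\|_F^2 = m$. Your only addition is the explicit unimodality computation $f'(t) = 2t(1-t-t^2)/(1+t)^2$ justifying that the minimum of $C_{r,s}^2$ over $\delta \leq |\mu_{r,s}| \leq \Delta$ is attained at an endpoint --- a step the paper asserts without proof, so this is a welcome bit of extra rigor rather than a different approach.
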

\begin{proof}
By Lemma~\ref{lem:main} we have
\begin{equation}\label{eq:lemma}
\mathbb{E}\|\vct{x} - \vct{x_{k}}\|_2^2 \leq \left(1 - \frac{1}{R}\right)^2\|\vct{x} - \vct{x_{k-1}}\|_2^2 - \frac{1}{m^2-m}\sum_{r < s} C_{r,s}^2\left(\langle \vct{x} - \vct{x_{k-1}}, \vct{a_{r}}\rangle^2 + \langle \vct{x} - \vct{x_{k-1}}, \vct{a_{s}}\rangle^2\right),
\end{equation}
where 
$$
C_{r,s} = \frac{|\langle\vct{a_{r}}, \vct{a_{s}}\rangle|-\langle\vct{a_{r}}, \vct{a_{s}}\rangle^2}{\sqrt{1-\langle\vct{a_{r}}, \vct{a_{s}}\rangle^2}}.
$$

By the assumption that $\delta \leq |\langle \vct{a_{r}}, \vct{a_{s}} \rangle| \leq \Delta$, we have 
$$
C_{r,s}^2 \geq \min\Big\{ \frac{\delta^2(1-\delta)}{1+\delta}, \frac{\Delta^2(1-\Delta)}{1+\Delta} \Big\} = D.
$$

Thus we have that 
\begin{align}\label{eq:final}
\frac{1}{m^2-m}\sum_{r < s} &C_{r,s}^2\left(\langle \vct{x} - \vct{x_{k-1}}, \vct{a_{r}}\rangle^2 + \langle \vct{x} - \vct{x_{k-1}}, \vct{a_{s}}\rangle^2\right)\notag\\
&\geq \frac{D}{m^2-m}\sum_{r < s} \left(\langle \vct{x} - \vct{x_{k-1}}, \vct{a_{r}}\rangle^2 + \langle \vct{x} - \vct{x_{k-1}}, \vct{a_{s}}\rangle^2\right)\notag\\
&= \frac{D(m-1)}{m^2-m}\sum_{r=1}^m \langle \vct{x} - \vct{x_{k-1}}, \vct{a_{r}}\rangle^2\notag\\
&\geq \frac{D}{m}\cdot\frac{\|\vct{x} - \vct{x_{k-1}}\|_2^2}{\|\mtx{A}^{-1}\|_2^2}.
\end{align}

In the last inequality we have employed the fact that for any $\vct{z}$,
$$
\sum_{r=1}^m \langle \vct{z}, \vct{a_{r}}\rangle^2 \geq \frac{\|\vct{z}\|_2^2}{\|\mtx{A}^{-1}\|_2^2}.
$$

Combining~\eqref{eq:final} and~\eqref{eq:lemma} along with the definition of $R$ yields the claim.

\end{proof}

Applying Lemma~\ref{lem:simple} recursively and using the fact that the selection of rows in each iteration is independent yields our main result Theorem~\ref{thm:main}.

\section{Noisy Systems}\label{sec:noise}

Next we consider systems which have been perturbed by noise.  The inconsistent system $\vct{b} = \mtx{A}\vct{x}$ now becomes (the possibly inconsistent system) $\vct{b} = \mtx{A}\vct{x} + \vct{w}$ for some error vector $\vct{w}$.  As evident from~\eqref{eq:noise}, the standard method with noise exhibits exponential convergence down to an error threshold, which is proportional to $\|\vct{w}\|_{\infty}$.  Our main result in the noisy case is that the two-subspace version again exhibits even faster exponential convergence, down to a threshold also proportional to $\|\vct{w}\|_{\infty}$.

\begin{theorem}\label{thm:noise}
Let $\mtx{A}$ be a full rank matrix with $m$ rows and suppose $\vct{b} = \mtx{A}\vct{x} + \vct{w}$ is a noisy system of equations.  Let $\vct{x_k}$ denote the estimation to the solution $\vct{x}$ in the $k$th iteration of the two-subspace Kaczmarz method.  Then
$$
\mathbb{E}\|\vct{x} - \vct{x_k}\|_2 \leq \eta^{k/2}\|\vct{x} - \vct{x}_{0}\|_2 + \frac{3}{1-\sqrt{\eta}}\cdot\frac{\|\vct{w}\|_{\infty}}{\sqrt{1-\Delta^2}} ,
$$

where $\eta = \left(1 - \frac{1}{R}\right)^2 - \frac{D}{R}$, $D = \min\Big\{ \frac{\delta^2(1-\delta)}{1+\delta}, \frac{\Delta^2(1-\Delta)}{1+\Delta} \Big\}$, $\Delta$ and $\delta$ are the coherence parameters~\eqref{mus}, and $R = \|\mtx{A}^{-1}\|^2\|\mtx{A}\|_F^2$ denotes the scaled condition number. 
\end{theorem}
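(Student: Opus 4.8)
The plan is to reduce the noisy iteration to the noiseless one by isolating how the error vector $\vct{w}$ enters a single update, and then to combine the resulting per-step bound with Lemma~\ref{lem:simple} through a linear recursion. First I would write out the update of Algorithm~\ref{alg:r2k} under $\vct{b} = \mtx{A}\vct{x}+\vct{w}$. Since now $b_s = \langle \vct{x}, \vct{a_s}\rangle + w_s$ and a short computation gives $\beta_k = \langle \vct{x}, \vct{v}\rangle + \widetilde{w}_k$ with $\widetilde{w}_k = \frac{w_r - \mu_k w_s}{\sqrt{1 - \mu_k^2}}$ (here $w_r, w_s$ are the corresponding coordinates of $\vct{w}$), the orthogonality $\langle \vct{a_s}, \vct{v}\rangle = 0$ lets me write the error $\vct{e_k} = \vct{x} - \vct{x_k}$ as the noiseless one-step error shifted by two explicit noise vectors:
$$
\vct{e_k} = \big(\vct{e_{k-1}} - \langle \vct{e_{k-1}}, \vct{a_s}\rangle\vct{a_s} - \langle \vct{e_{k-1}}, \vct{v}\rangle\vct{v}\big) - w_s\vct{a_s} - \widetilde{w}_k\vct{v},
$$
where the bracketed term is exactly the noiseless update analyzed in Lemma~\ref{lem:main}.

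Next I would apply the triangle inequality. Because $\vct{a_s}$ and $\vct{v}$ are unit vectors, the two noise vectors contribute at most $|w_s| + |\widetilde{w}_k|$, so
$$
\|\vct{e_k}\|_2 \le \|\vct{e_k}^{\mathrm{noiseless}}\|_2 + |w_s| + |\widetilde{w}_k|.
$$
Using $|w_r|, |w_s| \le \|\vct{w}\|_\infty$ together with $|\mu_k| \le \Delta$, which gives $\sqrt{1-\mu_k^2} \ge \sqrt{1-\Delta^2}$, I can bound $|\widetilde{w}_k| \le \frac{2\|\vct{w}\|_\infty}{\sqrt{1-\Delta^2}}$; combined with $|w_s| \le \|\vct{w}\|_\infty \le \frac{\|\vct{w}\|_\infty}{\sqrt{1-\Delta^2}}$ this yields the clean additive constant $c = \frac{3\|\vct{w}\|_\infty}{\sqrt{1-\Delta^2}}$ appearing in the statement.

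Then I would take expectations. Conditioning on $\vct{x_{k-1}}$ and averaging over the pair $(r,s)$, the noiseless term is controlled by Lemma~\ref{lem:simple}, which bounds its \emph{squared} norm by $\eta\|\vct{e_{k-1}}\|_2^2$; an application of Jensen's inequality passes to the norm itself, giving $\mathbb{E}\big[\|\vct{e_k}^{\mathrm{noiseless}}\|_2 \,\big|\, \vct{x_{k-1}}\big] \le \sqrt{\eta}\,\|\vct{e_{k-1}}\|_2$. The tower property then produces the linear recursion $a_k \le \sqrt{\eta}\,a_{k-1} + c$ for $a_k = \mathbb{E}\|\vct{x}-\vct{x_k}\|_2$, with $a_0 = \|\vct{x}-\vct{x_0}\|_2$ deterministic. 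Unrolling this recursion and summing the geometric series $\sum_{j\ge 0}\eta^{j/2} = \frac{1}{1-\sqrt{\eta}}$ (valid since $0 \le \eta < 1$, as $R \ge 1$ and $D \ge 0$) delivers exactly the claimed bound.

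The main obstacle is the bookkeeping of the error decomposition: one must verify that the noise enters only additively through $w_s\vct{a_s}$ and $\widetilde{w}_k\vct{v}$, so that the noiseless contraction from Lemma~\ref{lem:simple} applies verbatim to the bracketed term. The one genuinely delicate point is the passage from the squared-norm bound of Lemma~\ref{lem:simple} to a bound on the expected norm, where Jensen's inequality is essential; the noise-constant estimate and the geometric summation are then routine.
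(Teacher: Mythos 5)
Your proposal is correct and follows essentially the same route as the paper's proof: decompose the noisy update as the noiseless update plus additive noise terms along $\vct{a_s}$ and $\vct{v}$, apply the triangle inequality to get the per-step constant $\frac{3\|\vct{w}\|_\infty}{\sqrt{1-\Delta^2}}$, use Jensen's inequality with the one-step contraction lemma to obtain $\mathbb{E}\|\vct{x}-\vct{x_k^*}\|_2 \le \sqrt{\eta}\,\mathbb{E}\|\vct{x}-\vct{x_{k-1}}\|_2$, and unroll the recursion via the geometric series. If anything, your bookkeeping is slightly cleaner than the paper's (your noise indices $w_s\vct{a_s}$ and $\widetilde{w}_k = \frac{w_r-\mu_k w_s}{\sqrt{1-\mu_k^2}}$ match Algorithm~\ref{alg:r2k} as written, and you make the conditioning and the citation of Lemma~\ref{lem:simple} explicit), but the argument is the same.
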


As in the case of our main result Theorem~\ref{thm:main}, this bound is not tight.  The same improvements mentioned in the remarks about Theorem~\ref{thm:main} can also be applied here.  In particular, the dependence on $\Delta$ seems to be only an artifact of the proof (see Section~\ref{sec:numerics}.  Nonetheless, this result still shows that the two-subspace Kaczmarz method provides expected exponential convergence down to an error threshold which is analagous to that of the standard method.  The convergence factors are again substantially better than the standard method for coherent systems.

\begin{proof}[Proof of Theorem~\ref{thm:noise}]

Fix an iteration $k$ and denote by $\vct{y}$, $\vct{v}$, $\mu$ and $\beta$ the values of $\vct{y_k}$, $\vct{v_k}$, $\mu_k$ and $\beta_k$ for convenience.  Let $\vct{y'}$ and $\beta'$ be the values of $\vct{y_k}$, and $\beta_k$ as if there were noise (i.e. $\vct{w} = 0$).  In other words, we have
$$
\vct{y} = \vct{y'} + w_r\vct{a_r} \quad\text{and}\quad \beta = \beta' + \frac{w_r + \mu w_s}{\sqrt{1-\mu^2}}.
$$

Then by the definition of $\vct{x_{k}}$, we have
$$
\vct{x_{k}} = \vct{x^*_{k}} + w_r\vct{a_r} + \frac{w_r + \mu w_s}{\sqrt{1-\mu^2}}\vct{v},
$$
where $\vct{x^*_{k}} = \vct{y'} + (\beta' - \<\vct{x_{k-1}}, \vct{v}\>)\vct{v}$ denotes the next estimation from $\vct{x_{k-1}}$ if there were no noise.  Therefore, we have that
\begin{align*}
\|\vct{x} - \vct{x_k}\|_2 &\leq \|\vct{x} - \vct{x^*_k}\|_2 + \|w_r\vct{a_r} + \frac{w_r + \mu w_s}{\sqrt{1-\mu^2}}\vct{v}\|_2\\
&\leq \|\vct{x} - \vct{x^*_k}\|_2 + |w_r|\|\vct{a_r}\|_2 + \left|\frac{w_r + \mu w_s}{\sqrt{1-\mu^2}}\right| \|\vct{v}\|_2\\
&= \|\vct{x} - \vct{x^*_k}\|_2 + |w_r| + \left|\frac{w_r + \mu w_s}{\sqrt{1-\mu^2}}\right| \\
&\leq \|\vct{x} - \vct{x^*_k}\|_2 + \frac{3\|\vct{w}\|_{\infty}}{\sqrt{1-\Delta^2}}.
\end{align*}

By Jensen's inequality and Lemma~\ref{lem:main},
$$
\E\|\vct{x} - \vct{x^*_k}\|_2 \leq \sqrt{\eta}\E\|\vct{x} - \vct{x_{k-1}}\|_2.
$$

Combining the above recursively yields
\begin{align*}
\E\|\vct{x} - \vct{x_k}\|_2 &\leq \eta^{k/2}\|\vct{x} - \vct{x_0}\|_2 + \frac{3\|\vct{w}\|_{\infty}}{\sqrt{1-\Delta^2}}\sum_{j=1}^{k-1}\eta^{j/2}\\
&\leq \eta^{k/2}\|\vct{x} - \vct{x}_{0}\|_2 + \frac{3}{1-\sqrt{\eta}}\cdot\frac{\|\vct{w}\|_{\infty}}{\sqrt{1-\Delta^2}},
\end{align*}

which proves the claim.
\end{proof}

\section{Further improvements}\label{sec:mods}

Next we state and prove a lemma which demonstrates even more improvements on the convergence rate from the standard method in the case where the correlations between the rows are non-negative.  If this is not the case, we may alter one step of the two-subspace method to generalize the result to matrices with arbitrary correlations.  This modification will decrease the factor yet again in the exponential convergence rate of the two-subspace method.  We consider the noiseless case here, although results analagous to those in Section~\ref{sec:noise} can easily be obtained using the same methods. 

We first define an $m^2\times n$ matrix $\mtx{\Omega}$ whose rows $\omega$ are differnces of the rows of $\mtx{A}$:
\begin{equation}\label{omega}
 \vct{\omega}_{m(j-1)+i} = \left\{ \begin{array}{cc} \frac{\vct{a_j} - \vct{a_i}}{\| \vct{a_j} - \vct{a_i} \|}, \quad & j,i = 1, ..., m,\quad  j \neq i, \\ 
 \vct{0}, & j=i\end{array}\right.
 \end{equation}
 We may now state our main lemma.
 
 \begin{lemma}\label{lem:improve}
 Let $\vct{x_k}$ denote the estimation to $\mtx{A}\vct{x} = \vct{b}$ in the $k$th iteration of the two-subspace Kaczmarz method.  For indices $r$ and $s$, set $\mu_{r,s} = \langle\vct{a_r}, \vct{a_s}\rangle$.  We have the following bound,
\begin{align}
\mathbb{E}\|\vct{x} - \vct{x_k}\|_2^2 &\leq \left(1 - \frac{1}{R} \right)^2\|\vct{x} - \vct{x_{k-1}}\|_2^2 - \frac{1}{m^2-m}\sum_{r < s}  C_{r,s} \left(\langle \vct{x} - \vct{x_{k-1}}, \vct{a_r}\rangle^2 + \langle \vct{x} - \vct{x_{k-1}}, \vct{a_s}\rangle^2\right) \notag \\
&- \frac{1}{m^2-m} \sum_{j,i=1}^m E_{i,j} \langle \vct{x} - \vct{x}_{k-1}, \vct{\omega_{m(j-1)+i}} \rangle^2 \end{align}

where $C_{r,s} = \frac{\mu_{r,s}^2(1-\mu_{r,s})}{1+\mu_{r,s}}$ and $E_{i,j} = 4 \mu_{i,j}^3$.

\end{lemma}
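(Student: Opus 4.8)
The plan is to retrace the proof of Lemma~\ref{lem:main} line by line and simply decline to throw away the cross term that was discarded there. Writing $\vct{e_{k-1}} = \vct{x} - \vct{x_{k-1}}$ as before, recall that the only inequality used in passing from the exact expression for $\mathbb{E}|\mu^2\langle\vct{e_{k-1}},\vct{v}\rangle - \gamma\mu\langle\vct{e_{k-1}},\vct{a_s}\rangle|^2$ to the bound~\eqref{eq:bound} is the elementary estimate $(\theta u-\pi v)^2+(\theta v-\pi u)^2\ge(|\pi|-|\theta|)^2(u^2+v^2)$. This is lossy: the sharp statement is the identity
\[
(\theta u-\pi v)^2+(\theta v-\pi u)^2=(|\pi|-|\theta|)^2(u^2+v^2)+2\theta\pi(u-v)^2,\qquad \theta\pi\ge 0.
\]
The first summand reproduces the $C_{r,s}$ term exactly (the quantity called $C_{r,s}$ here is what was written $C_{r,s}^2$ in Lemma~\ref{lem:main}, the two agreeing once $\mu_{r,s}\ge 0$), and the second summand is precisely the extra nonnegative gain we wish to bank.

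Here the restriction to non-negative correlations is doing essential work. With $\theta_{r,s}=\mu_{r,s}^2/\sqrt{1-\mu_{r,s}^2}$ and $\pi_{r,s}=\mu_{r,s}/\sqrt{1-\mu_{r,s}^2}$ we have $\theta_{r,s}\pi_{r,s}=\mu_{r,s}^3/(1-\mu_{r,s}^2)$, which is genuinely nonnegative exactly when $\mu_{r,s}\ge 0$. This is the algebraic reason that the cross term is a true bonus in the present case, and the reason the general case must instead alter one step of the algorithm before the same trick applies.

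Next I would translate the banked term into the language of the difference matrix $\mtx{\Omega}$ from~\eqref{omega}. Setting $u=\langle\vct{e_{k-1}},\vct{a_r}\rangle$ and $v=\langle\vct{e_{k-1}},\vct{a_s}\rangle$, we have $u-v=\langle\vct{e_{k-1}},\vct{a_r}-\vct{a_s}\rangle=\|\vct{a_r}-\vct{a_s}\|\,\langle\vct{e_{k-1}},\vct{\omega}_{r,s}\rangle$, where $\vct{\omega}_{r,s}$ is the corresponding row of $\mtx{\Omega}$, and for standardized rows $\|\vct{a_r}-\vct{a_s}\|^2=2(1-\mu_{r,s})$. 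Substituting this back rewrites $2\theta_{r,s}\pi_{r,s}(u-v)^2$ as a nonnegative multiple of $\langle\vct{e_{k-1}},\vct{\omega}_{r,s}\rangle^2$; re-indexing the symmetric sum over unordered pairs $\{r,s\}$ as the full ordered sum over the rows $\vct{\omega}_{m(j-1)+i}$ then collects these contributions into the coefficient $E_{i,j}$. Combining this refined version of~\eqref{eq:bound} with the two-step randomized Kaczmarz bound $\mathbb{E}\|\vct{x}-\vct{z'}\|_2^2\le(1-1/R)^2\|\vct{x}-\vct{x_{k-1}}\|_2^2$ through~\eqref{eq:stand}, exactly as in Lemma~\ref{lem:main}, yields the claimed inequality.

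The genuine conceptual obstacle is entirely in the sign of $\theta_{r,s}\pi_{r,s}$, i.e.\ in the non-negativity of the correlations; once that is secured, everything else is a sharpening of an inequality that Lemma~\ref{lem:main} had deliberately loosened. The remaining routine-but-delicate part is the bookkeeping of the previous paragraph: tracking the normalization $\|\vct{a_r}-\vct{a_s}\|^2=2(1-\mu_{r,s})$, the factor relating the sum over unordered pairs $r<s$ to the ordered sum defining $\mtx{\Omega}$, and simplifying the product $2\theta_{r,s}\pi_{r,s}\cdot 2(1-\mu_{r,s})$ in order to pin down $E_{i,j}$. I would carry this simplification out explicitly; since the $\vct{\omega}$ term enters with a minus sign, the analysis of the convergence factor only needs a convenient lower estimate of this coefficient, so the constant can be reported in whatever simplified form is cleanest.
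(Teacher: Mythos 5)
Your proposal is correct and follows essentially the same route as the paper's own proof: there too, the lossy estimate used to obtain~\eqref{eq:bound} is replaced by an exact identity --- the paper writes it, with $u=\langle\vct{e_{k-1}},\vct{a_r}\rangle$ and $v=\langle\vct{e_{k-1}},\vct{a_s}\rangle$, in the equivalent form $(\mu_{r,s}u-v)^2+(\mu_{r,s}v-u)^2=(1-\mu_{r,s})^2(u^2+v^2)+2\mu_{r,s}(u-v)^2$, which is your identity after factoring out $\mu_{r,s}^2/(1-\mu_{r,s}^2)$ --- the cross term is then rewritten via the normalized difference rows of $\mtx{\Omega}$ from~\eqref{omega}, and the pieces are reassembled through~\eqref{eq:stand} and~\eqref{SV}. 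Non-negativity of the correlations plays exactly the role you assign it in both arguments.

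The one substantive point is the constant you left ``in whatever simplified form is cleanest,'' and there your arithmetic is right while the printed lemma is not. Carrying out the bookkeeping you describe gives, per unordered pair, a gain of $2\theta_{r,s}\pi_{r,s}\cdot 2(1-\mu_{r,s})\,\langle\vct{e_{k-1}},\vct{\omega}\rangle^2=\frac{2\mu_{r,s}^3}{1-\mu_{r,s}^2}\cdot 2(1-\mu_{r,s})\,\langle\vct{e_{k-1}},\vct{\omega}\rangle^2=\frac{4\mu_{r,s}^3}{1+\mu_{r,s}}\,\langle\vct{e_{k-1}},\vct{\omega}\rangle^2$, where $\vct{\omega}$ is the corresponding row of $\mtx{\Omega}$; re-indexing the sum over $r<s$ as the lemma's ordered double sum (each pair appears twice there) then forces $E_{i,j}=\frac{2\mu_{i,j}^3}{1+\mu_{i,j}}$. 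The paper reaches the larger value $E_{i,j}=4\mu_{i,j}^3$ through two slips: its proof normalizes the difference vector as if $\|\vct{a_r}-\vct{a_s}\|^2=2(1-\mu_{r,s}^2)$, whereas standardization gives $\|\vct{a_r}-\vct{a_s}\|^2=2(1-\mu_{r,s})$ (the value you use), and it transplants the resulting per-pair coefficient into the ordered double sum without the compensating factor $\frac{1}{2}$. Since the $E_{i,j}$ term enters the bound with a minus sign, your correct constant proves a slightly weaker inequality than the one stated; the qualitative conclusion --- a strict improvement over Lemma~\ref{lem:main} whenever the correlations are non-negative --- is unaffected, but a faithful completion of your argument should report $E_{i,j}=\frac{2\mu_{i,j}^3}{1+\mu_{i,j}}$ (equivalently $\frac{4\mu_{i,j}^3}{1+\mu_{i,j}}$ if the sum is restricted to $i<j$) rather than $4\mu_{i,j}^3$.
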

 
\begin{remark} If the correlations $\mu_{r,s} = \langle\vct{a_r}, \vct{a_s}\rangle$ between the rows of $\mtx{A}$ are non-negative, then the constants $E_{i,j}$ are all non-negative and thus this lemma offers a strict improvement over Lemma~\ref{lem:main}.  However, if this is not the case, this bound may actually be worse than that of Lemma~\ref{lem:main}.  To overcome this, we may simply modify the two-subspace Kaczmarz algorithm so that in each iteration $\mu_{r,s}$ is non-negative (by possibly using $-\vct{a_r}$ instead of $\vct{a_r}$ when needed for example).  This modification seems necessary only for the proof, and empirical results for the modified and unmodified methods remain the same.  From this point on, we will assume this modification is in place.
\end{remark}

\begin{proof}[Proof of Lemma~\ref{lem:improve}]
  We again fix an iteration $k$ and for convenience refer to $\vct{v}_k$, $\mu_k$, and $\vct{y}_k$ as $\vct{v}$, $\mu$, and $\vct{y}$, respectively.  We will also let $\vct{e_{k-1}} = \vct{x} - \vct{x_{k-1}}$ be the error in the $(k-1)$st iteration. 
  
By~\eqref{eq:stand} and~\eqref{SV}, we have that 
\begin{equation}\label{eq:restart}
\mathbb{E}\|\vct{x} - \vct{x_k}\|_2^2 = (1 - 1/R)\|\vct{x} - \vct{x_{k-1}}\|_2^2 + \mathbb{E}|\mu^2\langle\vct{e_{k-1}}, \vct{v}\rangle - \gamma\mu\langle \vct{e_{k-1}}, \vct{a_{s}} \rangle|^2.
\end{equation}

We now analyze the last term carefully. To take expectation we must look over all combinations of choices ${r,s}$, so to that end denote $\langle \vct{a_r}, \vct{a_s}\rangle$ by $\mu_{r,s}$.  Since we select two rows uniformly at random (with replacement), using the definitions of $\vct{c}$, $\mu$ and $\gamma$, we have

\begin{align*}
\mathbb{E}|\mu^2\langle\vct{e}_{k-1}&, \vct{c}\rangle - \gamma\mu\langle \vct{e}_{k-1}, \vct{a_s} \rangle|^2\\
&= \frac{1}{m^2-m}\sum_{r\ne s} \left|\frac{\mu_{r,s}^2}{\sqrt{1-\mu_{r,s}^2}} (\langle \vct{e}_{k-1}, \vct{a_r}\rangle - \mu_{r,s}\langle \vct{e}_{k-1}, \vct{a_s}\rangle) - \mu_{r,s}\sqrt{1 - \mu_{r,s}^2}\langle \vct{e}_{k-1}, \vct{a_s}\rangle  \right|^2\\
&= \frac{1}{m^2-m}\sum_{r\ne s} \left|\frac{\mu_{r,s}^2}{\sqrt{1-\mu_{r,s}^2}} \langle \vct{e}_{k-1}, \vct{a_r}\rangle - \left(\frac{\mu_{r,s}^3}{\sqrt{1-\mu_{r,s}^2}} + \mu_{r,s}\sqrt{1 - \mu_{r,s}^2} \right)\langle \vct{e}_{k-1}, \vct{a_s}\rangle\right|^2 \\
&= \frac{1}{m^2-m}\sum_{r\ne s} \frac{\mu_{r,s}^2}{1-\mu_{r,s}^2} \Big( \mu_{r,s} \langle \vct{e}_{k-1}, \vct{a_r}\rangle - \langle \vct{e}_{k-1}, \vct{a_s}\rangle \Big)^2
\end{align*}

Now observe that 
\begin{eqnarray}
\label{eq:alphabeta}
&& \big( \mu_{r,s}  \langle \vct{e}_{k-1}, \vct{a_r}\rangle -  \langle \vct{e}_{k-1}, \vct{a_s}\rangle \big)^2 + \big( \mu_{r,s}  \langle \vct{e}_{k-1}, \vct{a_s}\rangle -  \langle \vct{e}_{k-1}, \vct{a_r}\rangle \big)^2 \nonumber \\
&=& (1-\mu_{r,s})^2 \Big(  \langle \vct{e}_{k-1}, \vct{a_r}\rangle^2 +  \langle \vct{e}_{k-1}, \vct{a_s}\rangle^2 \Big) + 2 \mu_{r,s} \Big(  \langle \vct{e}_{k-1}, \vct{a_r} - \vct{a_s} \rangle \Big)^2 \nonumber \\
&=& (1-\mu_{r,s})^2 \Big(  \langle \vct{e}_{k-1}, \vct{a_r}\rangle^2 +  \langle \vct{e}_{k-1}, \vct{a_s}\rangle^2 \Big) + 4 \mu_{r,s} (1 - \mu_{r,s}^2) \Big(  \langle \vct{e}_{k-1}, \frac{\vct{a_r} - \vct{a_s}}{ \| \vct{a_r} - \vct{a_s} \| } \rangle \Big)^2 \nonumber 
\end{eqnarray}

  Thus taking advantage of the symmetry in the sum we have,

\begin{align}\label{eq:boundSym}
\mathbb{E}|\mu^2\langle\vct{e}_{k-1}&, \vct{c}\rangle - \gamma\mu\langle \vct{e}_{k-1}, \vct{a_s} \rangle|^2\notag\\
&= \frac{1}{m^2-m}\sum_{r < s} \frac{\mu_{r,s}^2 (1-\mu_{r,s})}{1+\mu_{r,s}} \Big(  \langle \vct{e}_{k-1}, \vct{a_r}\rangle^2 +  \langle \vct{e}_{k-1}, \vct{a_s}\rangle^2 \Big)  + 4 \mu_{r,s}^3  \Big(  \langle \vct{e}_{k-1}, \frac{\vct{a_r} - \vct{a_s}}{ \| \vct{a_r} - \vct{a_s} \| } \rangle \Big)^2   \notag\\
\end{align}

Combining with~\eqref{eq:restart} yields the claim.

\end{proof}
We may now use the coherence parameters $\delta$ and $\Delta$ from~\eqref{mus} to obtain the following simplified result.

\begin{lemma}\label{lem:improvesimple}
 Let $\vct{x_k}$ denote the estimation to $\mtx{A}\vct{x} = \vct{b}$ in the $k$th iteration of the two-subspace Kaczmarz method.  Then,
$$
\mathbb{E}\|\vct{x} - \vct{x_k}\|_2^2 \leq \left(\left(1 - \frac{1}{R}\right)^2 - \frac{D}{R} - \frac{E}{Q} \right)\|\vct{x} - \vct{x_{k-1}}\|_2^2 ,
$$

where $D = \min\Big\{ \frac{\delta^2(1-\delta)}{1+\delta}, \frac{\Delta^2(1-\Delta)}{1+\Delta} \Big\}$, $E = 4\delta^3$ and $R = \|\mtx{A}^{-1}\|^2\|\mtx{A}\|_F^2$ and $Q =  \|\mtx{\Omega}^{-1}\|^2\|\mtx{\Omega}\|_F^2$ denote the scaled condition numbers of $\mtx{A}$ and $\mtx{\Omega}$ (from~\eqref{omega}, respectively.
\end{lemma}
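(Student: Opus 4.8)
The plan is to follow the proof of Lemma~\ref{lem:simple} almost verbatim, but starting from the sharper three-term estimate of Lemma~\ref{lem:improve} instead of from Lemma~\ref{lem:main}. Abbreviating $\vct{e} = \vct{x} - \vct{x_{k-1}}$, Lemma~\ref{lem:improve} already presents the bound as $\left(1-\frac{1}{R}\right)^2\|\vct{e}\|_2^2$ minus two manifestly nonnegative sums: one weighted by the coefficients $C_{r,s} = \frac{\mu_{r,s}^2(1-\mu_{r,s})}{1+\mu_{r,s}}$ over pairs of rows, and one weighted by $E_{i,j} = 4\mu_{i,j}^3$ over the normalized row differences $\vct{\omega}_{m(j-1)+i}$ from~\eqref{omega}. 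The goal is to lower bound each of these two sums by a constant multiple of $\|\vct{e}\|_2^2$, producing the subtracted terms $\frac{D}{R}$ and $\frac{E}{Q}$ respectively; subtracting both then gives the stated factor.

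For the first sum I would argue exactly as in Lemma~\ref{lem:simple}: from $\delta \le |\mu_{r,s}| \le \Delta$ the same endpoint estimate yields $C_{r,s} \ge D$, after which factoring out $D$ and using the symmetry identity $\sum_{r<s}\big(\langle\vct{e},\vct{a_r}\rangle^2 + \langle\vct{e},\vct{a_s}\rangle^2\big) = (m-1)\sum_{r=1}^m \langle\vct{e},\vct{a_r}\rangle^2$ together with $\sum_{r=1}^m \langle\vct{e},\vct{a_r}\rangle^2 \ge \|\vct{e}\|_2^2/\|\mtx{A}^{-1}\|^2$ and the standardization fact $\|\mtx{A}\|_F^2 = m$ shows this sum is at least $\frac{D}{R}\|\vct{e}\|_2^2$. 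This reproduces the computation~\eqref{eq:final} verbatim, so no new work is required here.

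For the second sum the identical recipe is applied to $\mtx{\Omega}$. Under the non-negativity modification assumed in the remark preceding Lemma~\ref{lem:improve}, every correlation satisfies $\mu_{i,j} \ge \delta \ge 0$, so $E_{i,j} = 4\mu_{i,j}^3 \ge 4\delta^3 = E$ (the diagonal terms $i=j$ are irrelevant since $\vct{\omega}_{m(j-1)+i} = \vct{0}$ there). Factoring out $E$ leaves $\sum_{j,i=1}^m \langle\vct{e},\vct{\omega}_{m(j-1)+i}\rangle^2 = \|\mtx{\Omega}\vct{e}\|_2^2 \ge \|\vct{e}\|_2^2/\|\mtx{\Omega}^{-1}\|^2$ by the defining property of $\|\mtx{\Omega}^{-1}\|$. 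Since each nonzero row of $\mtx{\Omega}$ is a unit vector and there are exactly $m^2-m$ of them, we have $\|\mtx{\Omega}\|_F^2 = m^2-m$, which cancels the $\frac{1}{m^2-m}$ prefactor and leaves $\frac{E}{Q}\|\vct{e}\|_2^2$ with $Q = \|\mtx{\Omega}^{-1}\|^2\|\mtx{\Omega}\|_F^2$. Combining the two lower bounds with Lemma~\ref{lem:improve} yields the claim.

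The main obstacle is the step $\|\mtx{\Omega}\vct{e}\|_2^2 \ge \|\vct{e}\|_2^2/\|\mtx{\Omega}^{-1}\|^2$, which presupposes that $\mtx{\Omega}$ has full column rank so that $\|\mtx{\Omega}^{-1}\|$ is finite and $Q$ is well-defined. This amounts to requiring that the row differences $\vct{a_j}-\vct{a_i}$ span $\mathbb{C}^n$, a condition that does not follow automatically from $\mtx{A}$ being full rank, so I would need to verify it or carry it as a standing hypothesis. Everything else is a routine transcription of the argument of Lemma~\ref{lem:simple} with $\mtx{\Omega}$ replacing $\mtx{A}$, so the only genuinely new ingredient is controlling the geometry of the difference matrix $\mtx{\Omega}$ and correctly accounting for its Frobenius norm.
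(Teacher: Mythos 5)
Your proof is correct and takes essentially the same route as the paper's: reuse the Lemma~\ref{lem:simple} argument for the $C_{r,s}$ term, bound $E_{i,j} = 4\mu_{i,j}^3 \geq 4\delta^3$ via the non-negativity modification, and handle the new term through $\sum_{i\neq j}\langle \vct{e}, \vct{\omega}_{m(j-1)+i}\rangle^2 = \|\mtx{\Omega}\vct{e}\|_2^2 \geq \|\vct{e}\|_2^2/\|\mtx{\Omega}^{-1}\|^2$ together with $\|\mtx{\Omega}\|_F^2 = m^2-m$. Your caveat that $\mtx{\Omega}$ must have full column rank for $\|\mtx{\Omega}^{-1}\|$ to be finite is a point the paper leaves implicit as well; when the row differences fail to span, the natural convention $\|\mtx{\Omega}^{-1}\| = \infty$ makes the term $E/Q$ vanish and the bound degenerates to that of Lemma~\ref{lem:simple}, so no standing hypothesis is actually lost.
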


\begin{proof}

In light of Lemma~\ref{lem:improve} and the proof of Lemma~\ref{lem:simple}, it suffices to show that
$$
\frac{1}{m^2-m}\sum_{j,i=1}^m E_{i,j} \langle \vct{x} - \vct{x}_{k-1}, \vct{\omega_{m(j-1)+i}} \rangle^2 \geq \frac{E}{Q}\|\vct{x} - \vct{x}_{k-1}\|_2^2.
$$

By the definition~\eqref{mus} of $\delta$ and $\|\Omega^{-1}\|$, we have
\begin{align*}
\frac{1}{m^2-m}\sum_{i\ne j}^m E_{i,j} \langle \vct{x} - \vct{x}_{k-1}, \vct{\omega_{m(j-1)+i}} \rangle^2 &\geq \frac{4\delta^3}{m^2-m}\sum_{i \ne j}^m  \langle \vct{x} - \vct{x}_{k-1}, \vct{\omega_{m(j-1)+i}} \rangle^2\\
&\geq \frac{4\delta^3}{m^2-m}\cdot\frac{\|\vct{x} - \vct{x}_{k-1}\|_2^2}{\|\Omega^{-1}\|}\\
&\geq \frac{E}{Q}.
\end{align*}

The last equality follows since the rows of $\Omega$ are unit norm and equal to zero for $i=j$.

\end{proof}

Applying Lemma~\ref{lem:improvesimple} recursively yields our main theorem.

\begin{theorem}\label{thm:improve}
 Let $\vct{x_k}$ denote the estimation to $\mtx{A}\vct{x} = \vct{b}$ in the $k$th iteration of the two-subspace Kaczmarz method.  Then,
$$
\mathbb{E}\|\vct{x} - \vct{x_k}\|_2^2 \leq \left(\left(1 - \frac{1}{R}\right)^2 - \frac{D}{R} -  \frac{E}{Q} \right)^k\|\vct{x} - \vct{x_{0}}\|_2^2 ,
$$

where $D = \min\Big\{ \frac{\delta^2(1-\delta)}{1+\delta}, \frac{\Delta^2(1-\Delta)}{1+\Delta} \Big\}$, $E = 4\delta^3$ and $R = \|\mtx{A}^{-1}\|^2\|\mtx{A}\|_F^2$ and $Q =  \|\mtx{\Omega}^{-1}\|^2\|\mtx{\Omega}\|_F^2$ denote the scaled condition numbers of $\mtx{A}$ and $\mtx{\Omega}$ (from~\eqref{omega}, respectively.
\end{theorem}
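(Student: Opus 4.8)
The plan is to derive the $k$-step bound of Theorem~\ref{thm:improve} by iterating the single-iteration contraction established in Lemma~\ref{lem:improvesimple}. Set
$$
\rho = \left(1 - \frac{1}{R}\right)^2 - \frac{D}{R} - \frac{E}{Q},
$$
which is a fixed constant depending only on $\mtx{A}$ (through $R$ and $D$) and on $\mtx{\Omega}$ (through $Q$ and $E$), and in particular does not depend on the iteration index nor on the current iterate. Lemma~\ref{lem:improvesimple} supplies exactly the one-step estimate $\mathbb{E}\|\vct{x} - \vct{x_k}\|_2^2 \leq \rho\,\|\vct{x} - \vct{x_{k-1}}\|_2^2$, in which the expectation is taken over the random pair of rows chosen in iteration $k$ with the previous iterate $\vct{x_{k-1}}$ held fixed.

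First I would make the conditioning explicit. Let $\mathcal{F}_{k-1}$ denote the history of all row-pairs selected in iterations $1, \ldots, k-1$, so that $\vct{x_{k-1}}$ is determined by $\mathcal{F}_{k-1}$. Because the algorithm draws its pair of rows in iteration $k$ uniformly at random and independently of all earlier draws, Lemma~\ref{lem:improvesimple} applies verbatim to the conditional expectation, giving
$$
\mathbb{E}\!\left[\,\|\vct{x} - \vct{x_k}\|_2^2 \,\middle|\, \mathcal{F}_{k-1}\right] \leq \rho\,\|\vct{x} - \vct{x_{k-1}}\|_2^2.
$$
Taking the full expectation of both sides and invoking the tower property $\mathbb{E}\big[\mathbb{E}[\,\cdot \mid \mathcal{F}_{k-1}]\big] = \mathbb{E}[\,\cdot\,]$ then collapses this to $\mathbb{E}\|\vct{x} - \vct{x_k}\|_2^2 \leq \rho\,\mathbb{E}\|\vct{x} - \vct{x_{k-1}}\|_2^2$, a clean recursion among the unconditional expected squared errors.

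The remaining step is a routine induction on $k$. Unwinding the recursion from iteration $k$ down to the deterministic initial estimate $\vct{x_0}$ yields $\mathbb{E}\|\vct{x} - \vct{x_k}\|_2^2 \leq \rho^k\,\|\vct{x} - \vct{x_0}\|_2^2$, which is precisely the claimed bound. I do not expect a genuine obstacle here: the only point requiring care is the independence of successive row selections, which is exactly what licenses the factorization of the expectation across iterations, and this is the same mechanism already used to pass from Lemma~\ref{lem:simple} to Theorem~\ref{thm:main}. One may additionally remark that the estimate is informative only when $\rho \in [0,1)$; the inequality itself is valid for any value of $\rho$, but its interpretation as exponential decay relies on the nonnegativity guaranteed by the coherence assumptions entering $D$ and $E$ (recalling that the modification ensuring each $\mu_{r,s} \geq 0$ makes $E = 4\delta^3 \geq 0$).
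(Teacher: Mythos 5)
Your proposal is correct and follows exactly the paper's route: the paper proves Theorem~\ref{thm:improve} by applying Lemma~\ref{lem:improvesimple} recursively, using the independence of the row selections across iterations, which is precisely your conditioning-plus-tower-property argument made explicit. Your added remarks on the filtration $\mathcal{F}_{k-1}$ and on the sign of the contraction factor $\rho$ are a careful elaboration of what the paper leaves implicit, not a departure from it.
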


\section{Numerical Results}\label{sec:numerics}

Next we perform several experiments to compare the convergence rate of the two-subspace randomized Kaczmarz with that of the standard randomized Kaczmarz method.  As discussed, both methods exhibit exponential convergence in expectation, but in many regimes the constant factor in the exponential bound of the two-subspace method is much smaller, yielding much faster convergence. 

To test these methods, we construct various types of $500\times 50$ matrices $\mtx{A}$.  To get a range of $\delta$ and $\Delta$, we set the entries of $\mtx{A}$ to be independent indentically distributed uniform random variables on some interval $[c, 1]$.  Changing the value of $c$ will appropriately change the values of $\delta$ and $\Delta$.  Note that there is nothing special about this interval, other intervals (both negative and positive or both) of varying widths yield the same results.  For each matrix construction, both the randomized Kaczmarz and two-subspace randomized methods are run with the same initial (randomly selected) estimate.  The estimation errors are computed at each iteration.  Since each iteration of the two-subspace method utilizes two rows of the matrix $\mtx{A}$, we call a single iteration of the standard method two iterations in the Algorithm~\ref{alg:rk} for fair comparison.

Figure~\ref{fig:highCoh} demonstrates the regime where the two-subspace method offers the most improvement over the standard method.  Here the matrix $A$ has highly coherent rows, with $\delta \approx \Delta \approx 1$. 

  \begin{figure}[h!]
\begin{center}
\includegraphics[width=3in]{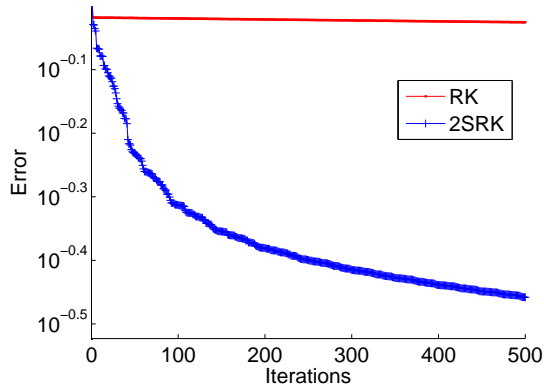}    
\end{center}
\caption{A log-linear plot of the error per iteration for the randomized Kaczmarz (RK) and two-subspace RK (2SRK).  Matrix $\mtx{A}$ has uniformly distributed highly coherent rows with $\delta = 0.992$ and $\Delta = 0.998$. }\label{fig:highCoh}
\end{figure}

Our result Theorem~\ref{thm:main} suggests that as $\delta$ becomes smaller the two-subspace method should offer less and less improvements over the standard method.  When $\delta=0$ the convergence rate bound of Theorem~\ref{thm:main} is precisely the same as that of the standard method~\eqref{SV}.  Indeed, we see this precise behavior as is depicted in Figure~\ref{fig:others}.  

\begin{figure}[h!]
\begin{center}
$\begin{array}{c@{\hspace{.1in}}c}
(a) \includegraphics[width=2.5in]{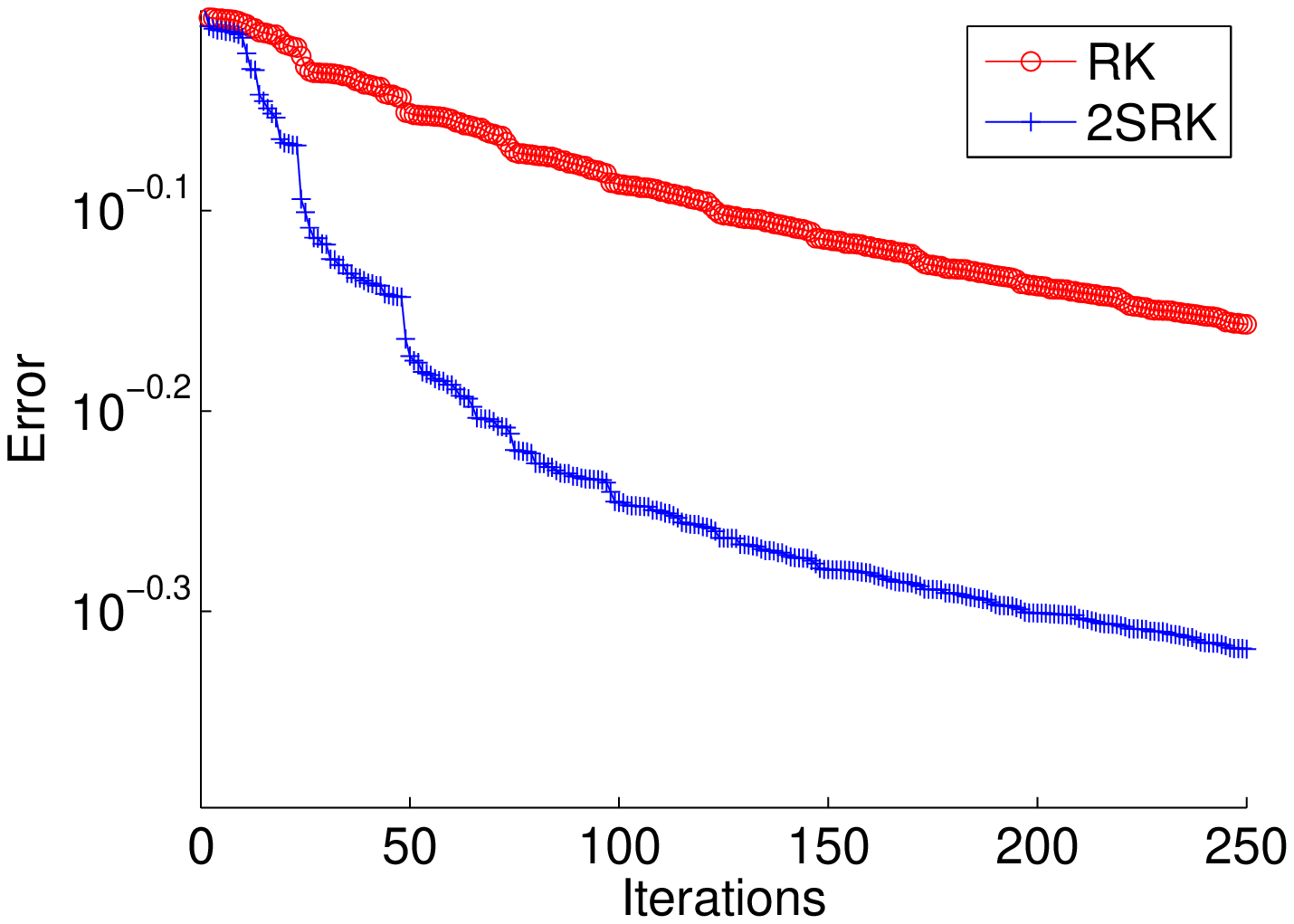} &
(b) \includegraphics[width=2.5in]{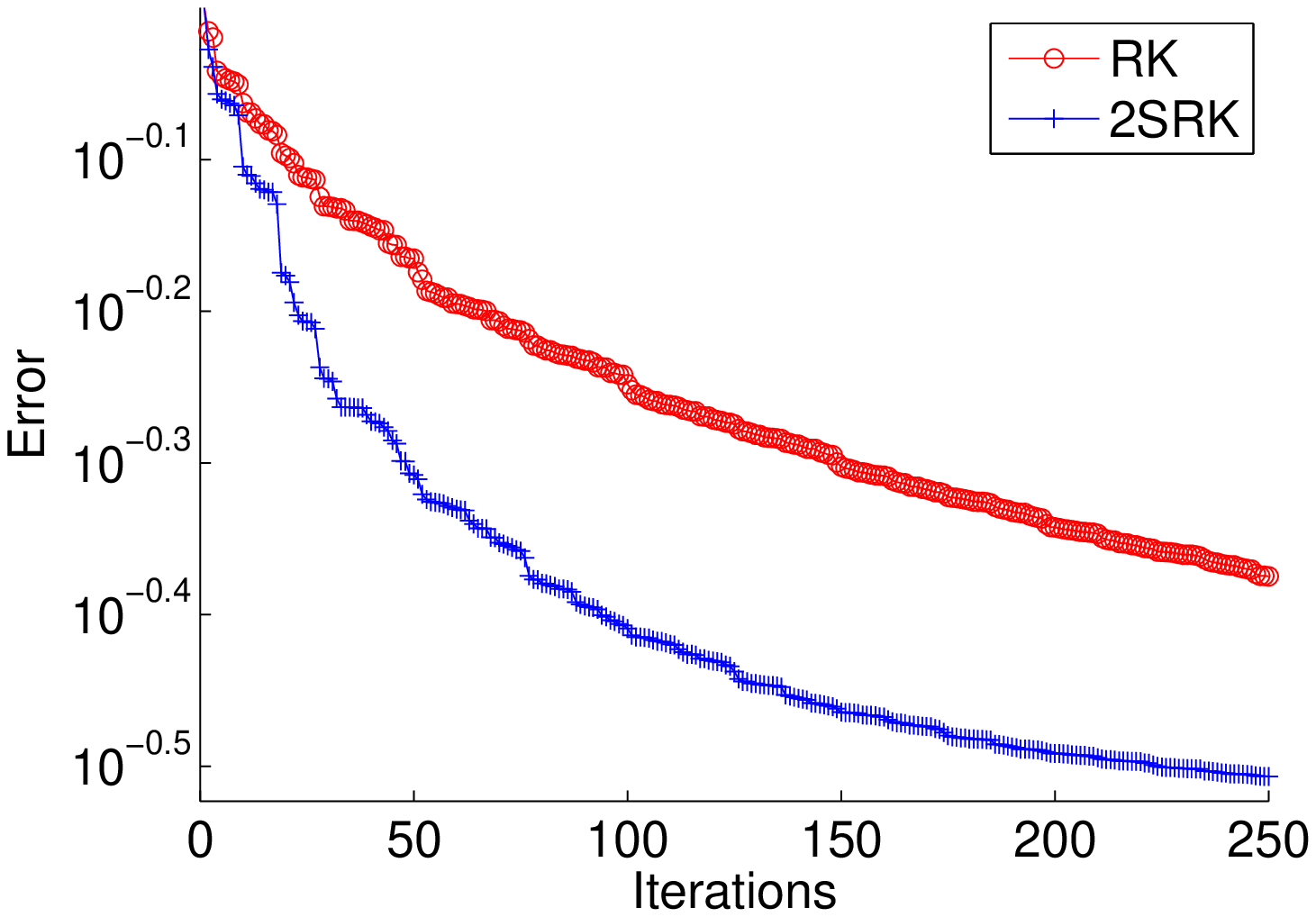} \\
(c) \includegraphics[width=2.5in]{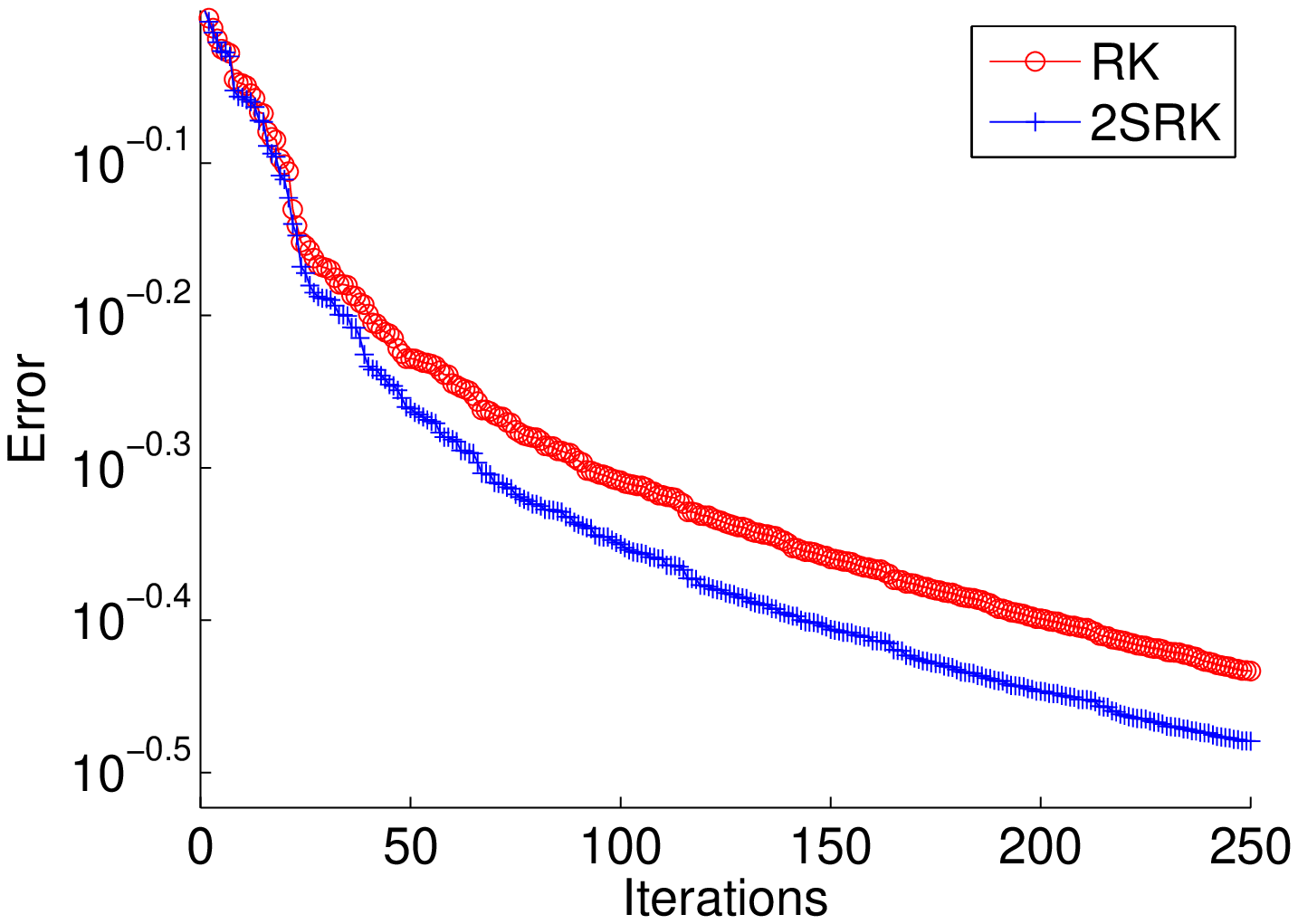} &
(d) \includegraphics[width=2.5in]{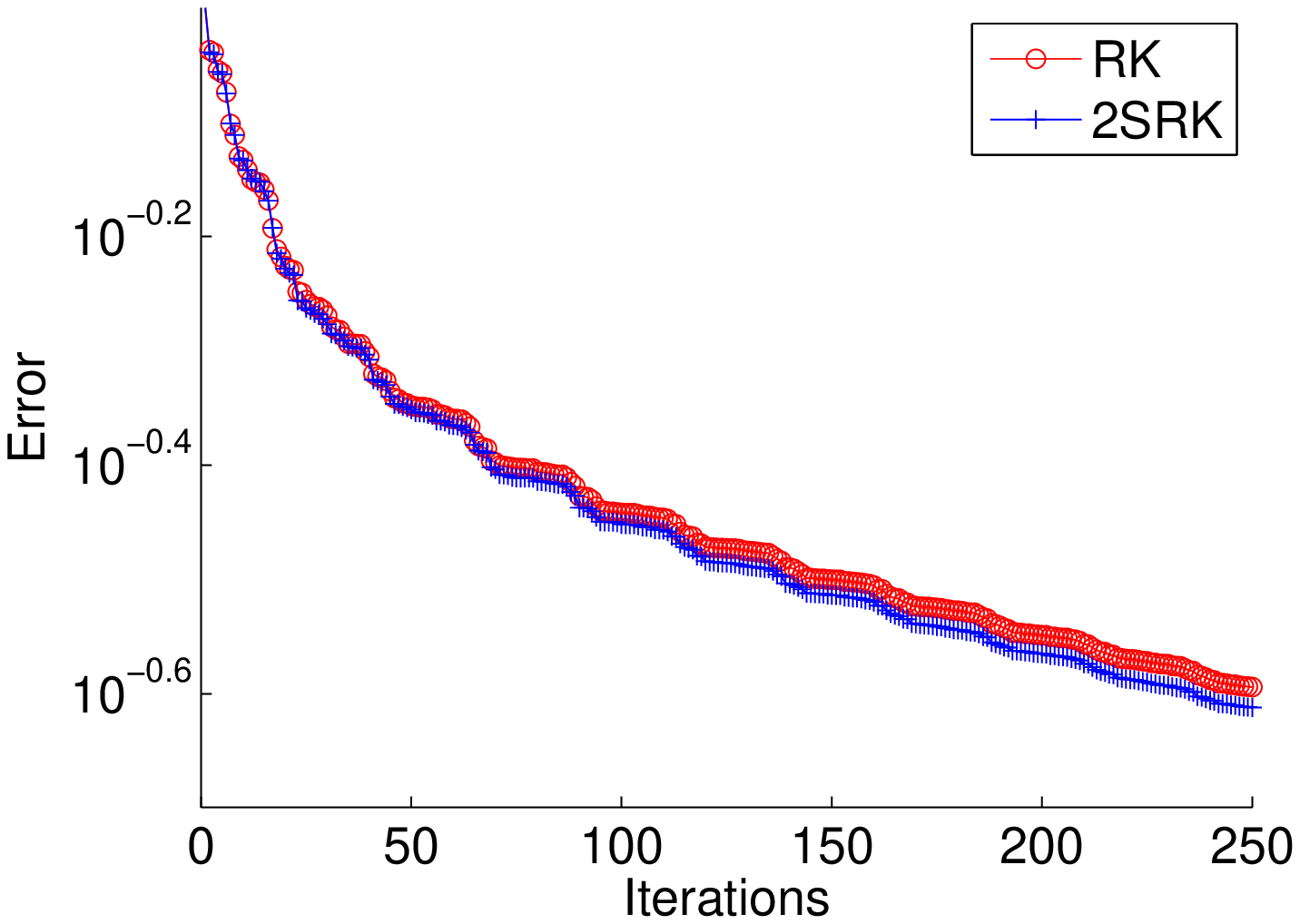}\\
\end{array}$    
\end{center}
\caption{A log-linear plot of the error per iteration for the randomized Kaczmarz (RK) and two-subspace RK (2SRK).  Matrix $\mtx{A}$ has uniformly distributed highly coherent rows with (a) $\delta = 0.837$ and $\Delta = 0.967$, (b) $\delta = 0.534$ and $\Delta = 0.904$, (c) $\delta = 0.018$ and $\Delta = 0.819$, and (d) $\delta = 0$ and $\Delta = 0.610$. }\label{fig:others}
\end{figure}

Next we performed experiments on noisy systems.  We used the same dimensions and construction of the matrix $\mtx{A}$ as well as the signal type.  Then we added i.i.d. Gaussian noise with norm $0.1$ to the measurements $\vct{b}$.  Figure~\ref{fig:noise1} demonstrates the exponential convergence of the methods in the presence of noise for various values of $\delta$ and $\Delta$.

\begin{figure}[h!]
\begin{center}
$\begin{array}{c@{\hspace{.1in}}c}
(a) \includegraphics[width=2.5in]{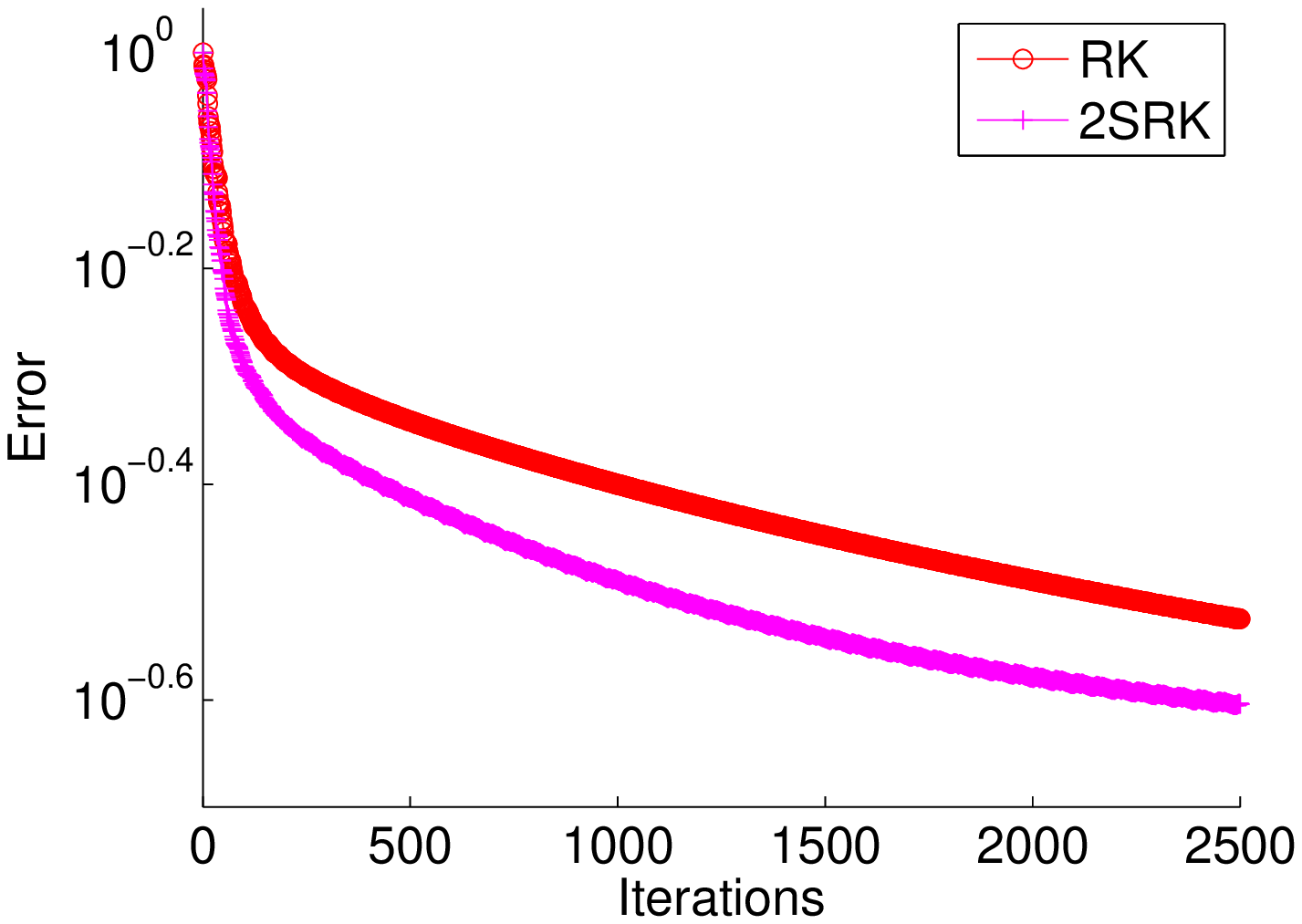} &
(b) \includegraphics[width=2.5in]{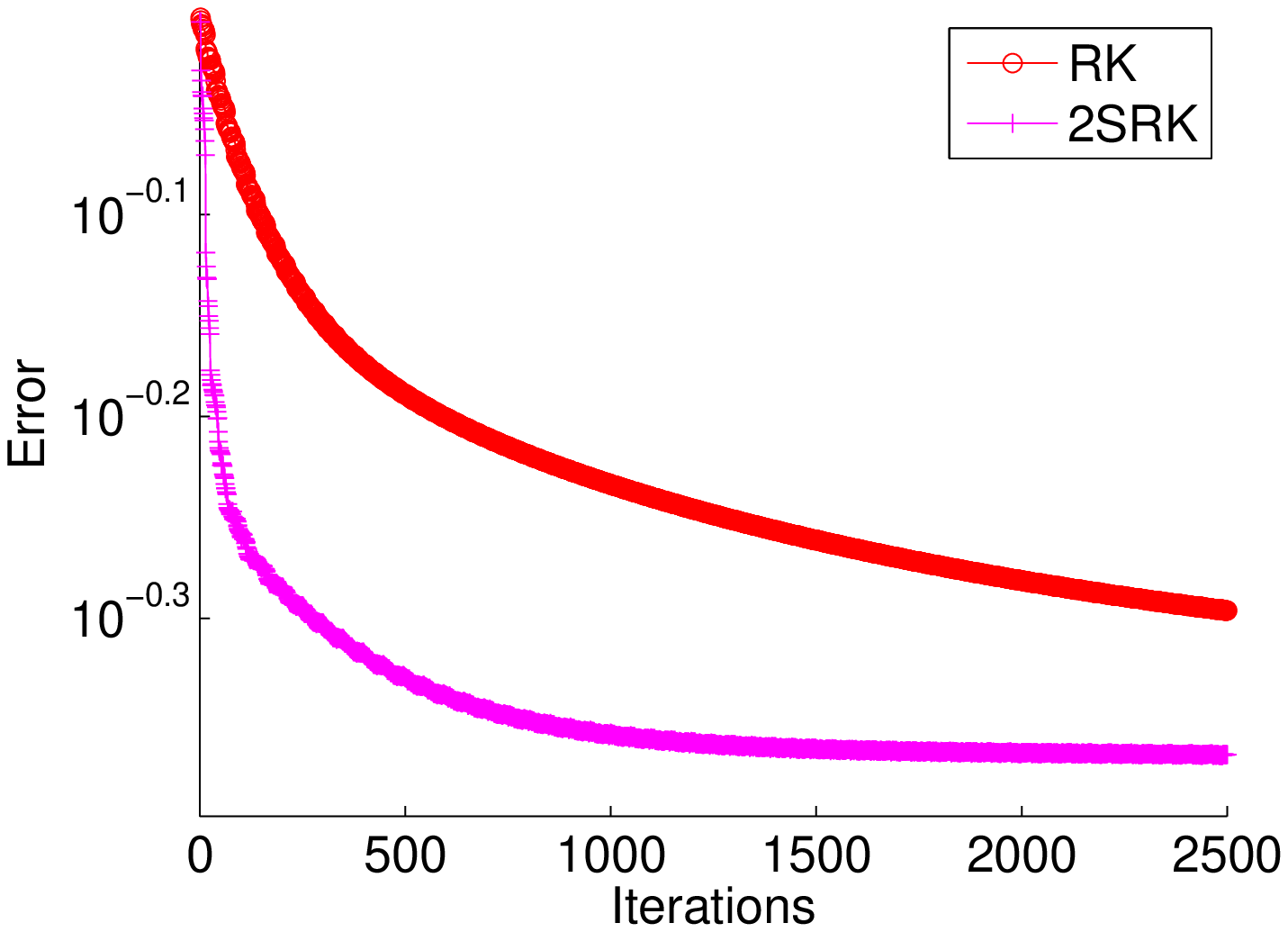} \\
(c) \includegraphics[width=2.5in]{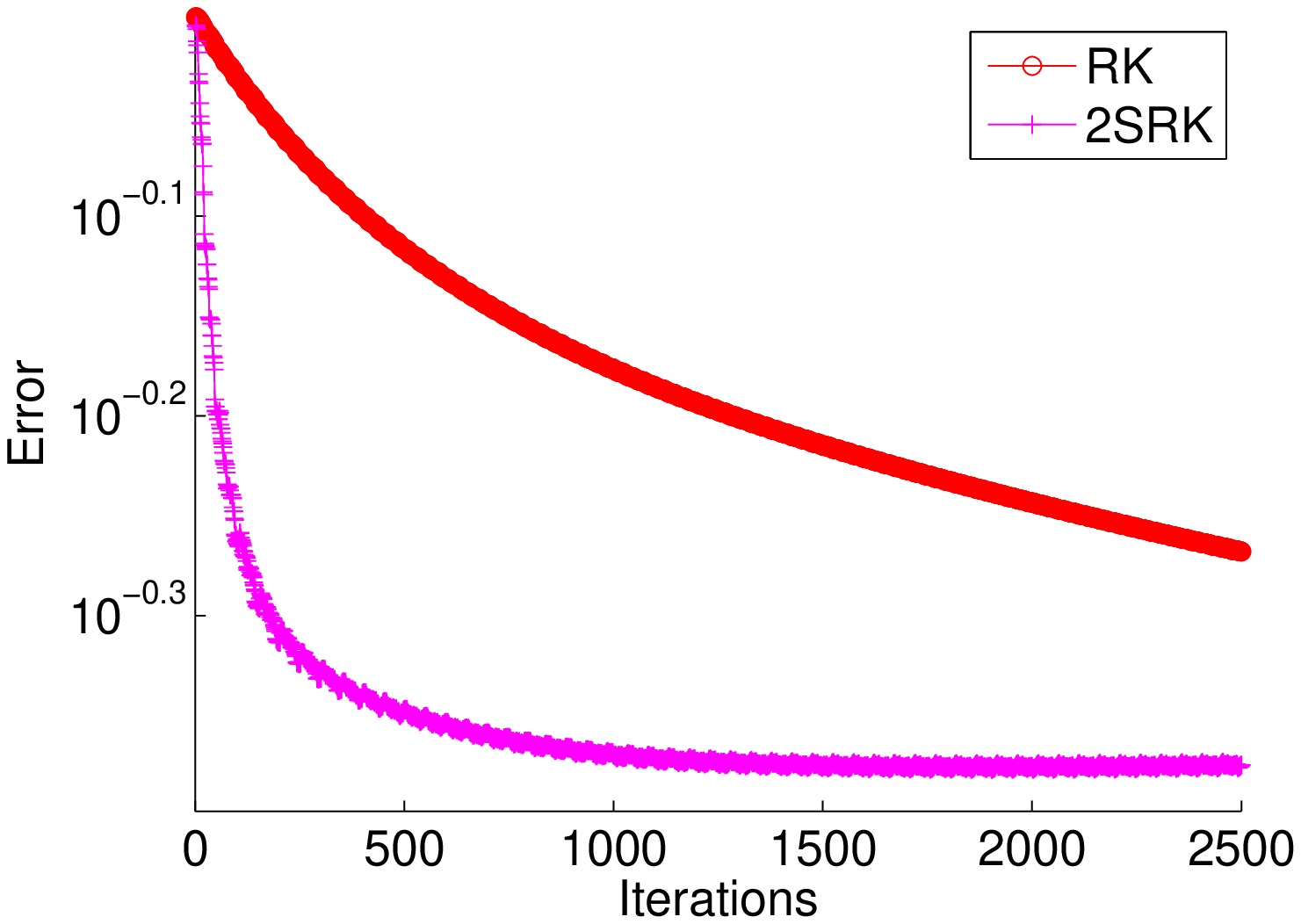} &
(d) \includegraphics[width=2.5in]{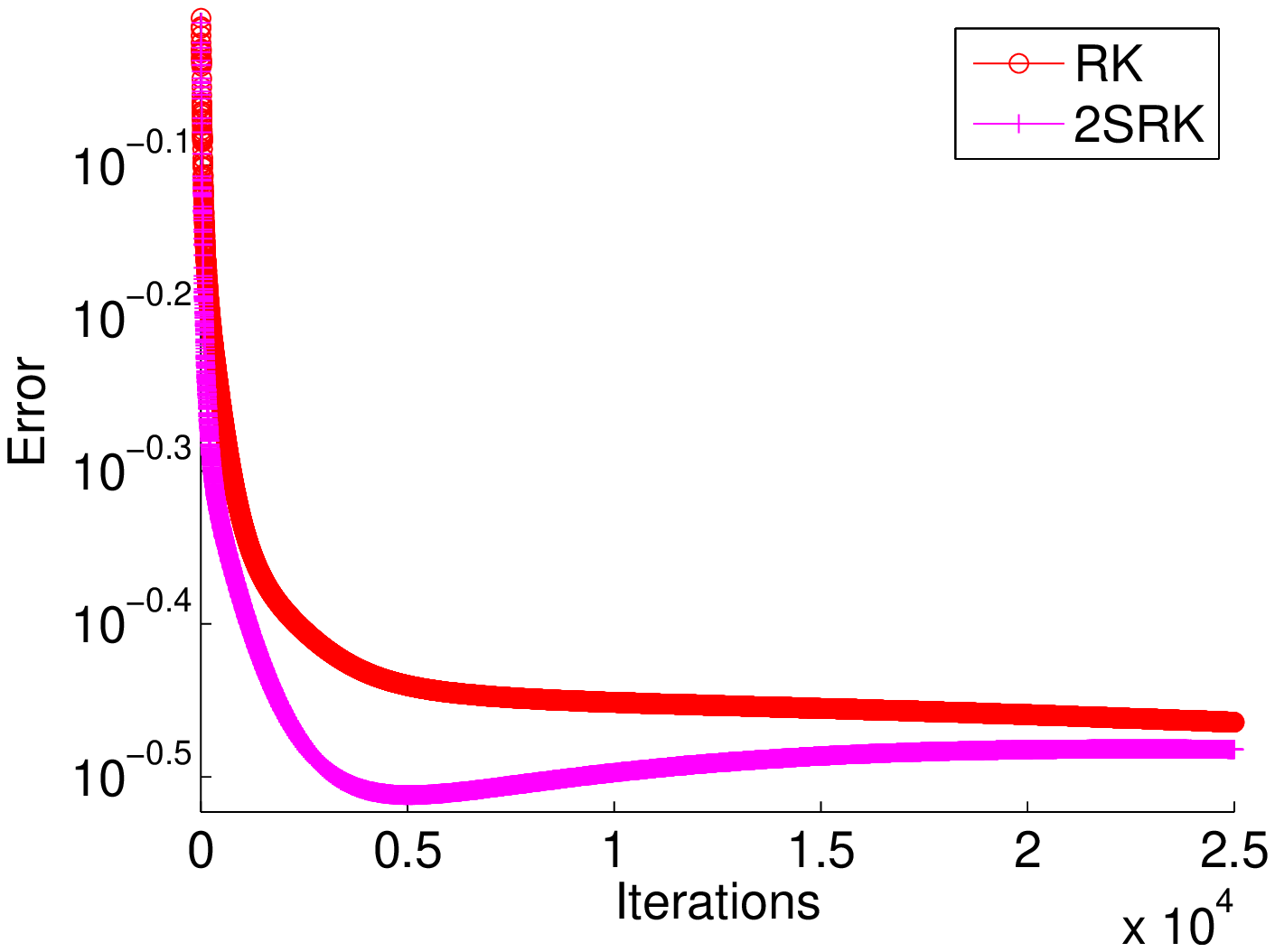}\\
\end{array}$    
\end{center}
\caption{A log-linear plot of the error per iteration for the randomized Kaczmarz (RK) and two-subspace RK (2SRK).  Matrix $\mtx{A}$ has uniformly distributed highly coherent rows with (a) $\delta = 0.651$ and $\Delta = 0.932$, (b) $\delta = 0.933$ and $\Delta = 0.985$, (c) $\delta = 0.964$ and $\Delta = 0.992$, and (d) $\delta = 0.965$ and $\Delta = 0.991$. }\label{fig:noise1}
\end{figure}

Plots (a), (b), and (c) demonstrate exponential convergence to the error threshold (or below) with improvements over the standard method.  Plot (d) shows the semi-convergence effect of the two-subspace Kaczmarz method.  It is an open problem to determine at which point to terminate the method for optimal error without knowledge of the solution $\vct{x}$.  One option is to simply to terminate after $\bigO(n^2)$ iterations, as this is the amount of iterations needed for convergence (see the discussion in~\cite{SV09:Arand}).  This is of course not optimal, and as Figure~\ref{fig:noise2} shows, using this halting criterion may cause the two-subspace method to perform worse than the standard Kaczmarz method.  We leave overcoming this challenge for future work. 

  \begin{figure}[h!]
\begin{center}
$\begin{array}{c@{\hspace{.1in}}c}
(a) \includegraphics[width=2.5in]{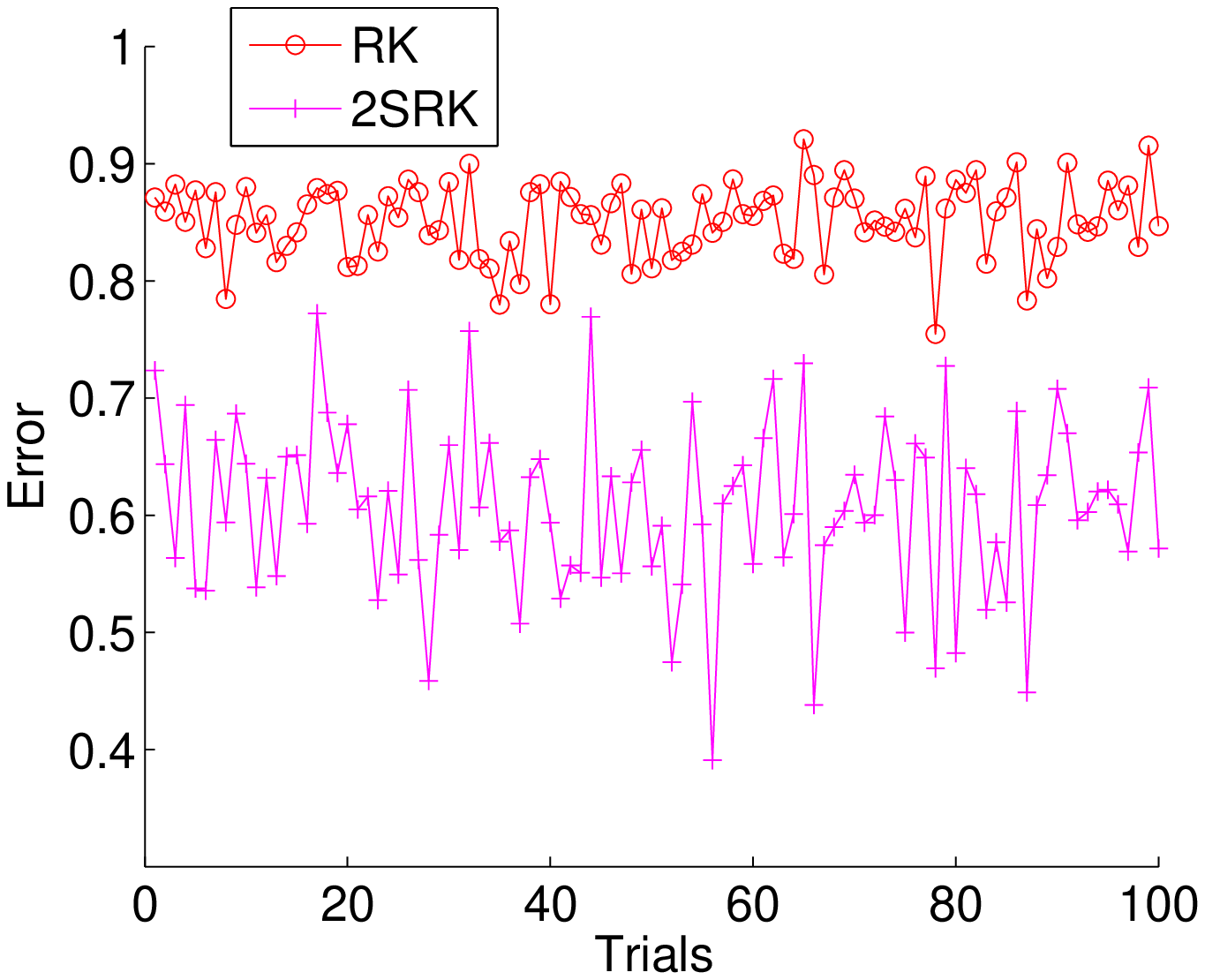}  &
(b) \includegraphics[width=2.5in]{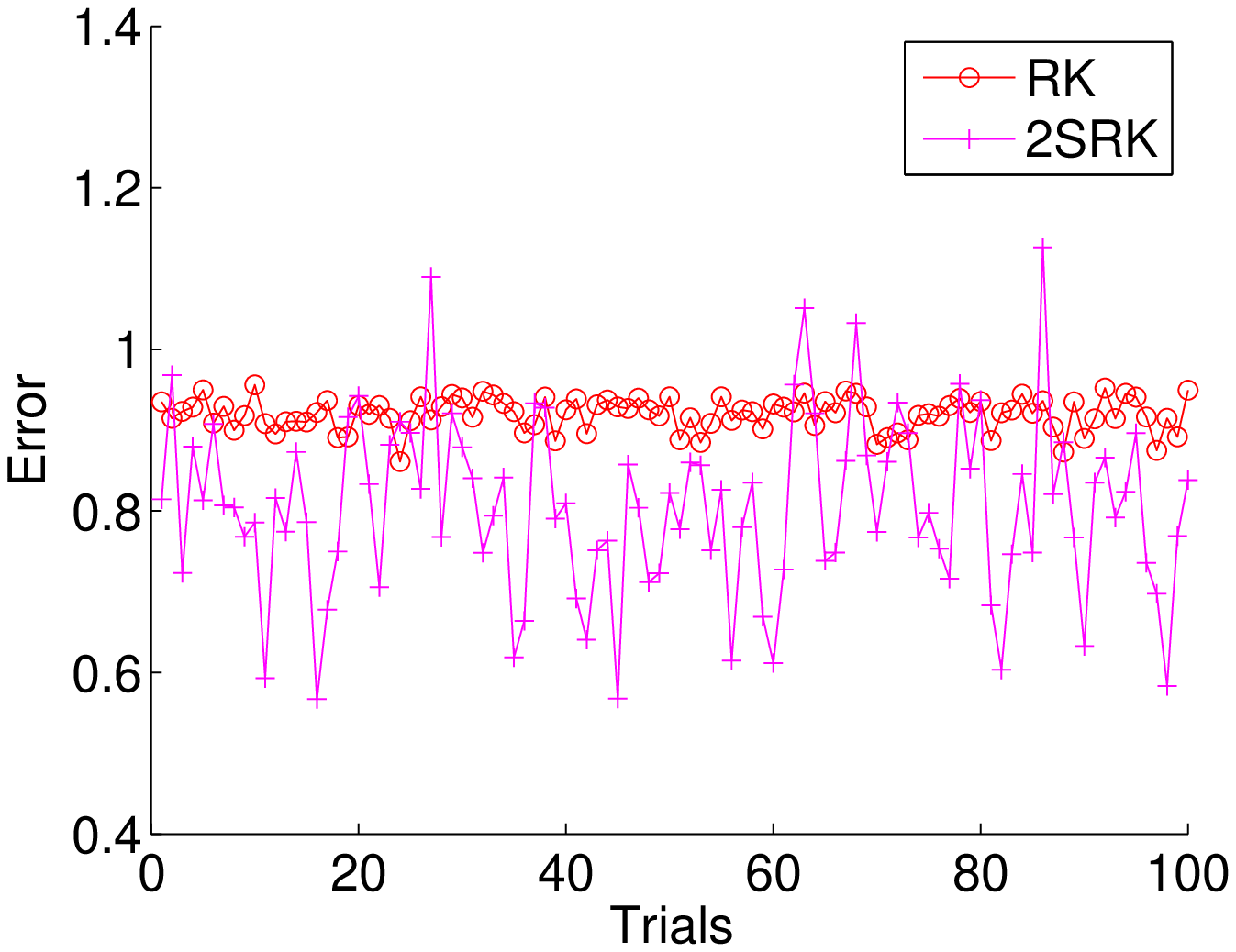}  \\
\end{array}$    
\end{center}
\caption{A plot of the error after 500 iterations for the randomized Kaczmarz (RK) and two-subspace RK (2SRK).  Matrix $\mtx{A}$ has uniformly distributed highly coherent rows with (a) $\delta = 0.981$ and $\Delta = 0.995$, and (b) $\delta = 0.993$ and $\Delta = 0.998$. }\label{fig:noise2}
\end{figure}

\section{Discussion}\label{sec:discuss}

As is evident from Theorems~\ref{thm:main} and~\ref{thm:improve}, the two-subspace Kaczmarz method provides exponential convergence in expectation to the solution of $\mtx{A}\vct{x} = \vct{b}$.  The constant in the rate of convergence for the two-subspace Kaczmarz method is at most equal to that of the best known results for the randomized Kaczmarz method~\eqref{SV}.  When the matrix $\mtx{A}$ has many correlated rows, the constant is significantly lower than that of the standard method, yielding substantially faster convergence.  This has positive implications for many applications such as nonuniform sampling in Fourier analysis, as discussed in Section~\ref{sec:intro}. 

We emphasize that the bounds presented in our main theorems are weaker than what we actually prove, and that even when $\delta$ is small, if the rows of $\mtx{A}$ still have many correlations, Lemmas~\ref{lem:main} and~\ref{lem:improve} still guarantee improved convergence. For example, if the matrix $\mtx{A}$ has correlated rows but contains a pair of identical rows and a pair of orthogonal rows, it will of course be that $\delta = 0$ and $\Delta = 1$.  However, we see from the proofs of our main theorems that the two-subspace method still guarantees substantial improvement over the standard method.  Numerical experiments in cases like this produce results identical to those in Section~\ref{sec:numerics}. 

It is clear both from the numerical experiments and Theorem~\ref{thm:main} that the two-subspace Kaczmarz performs best when the correlations $\<\vct{a_r}, \vct{a_s}\>$ are bounded away from zero.  In particular, the larger $\delta$ is the faster the convergence of the two-subspace method.  The dependence on $\Delta$, however, is not as straightforward.  Theorems~\ref{thm:main} and~\ref{thm:improve} suggest that when $\Delta$ is very close to $1$ the two-subspace method should provide similar convergence to the standard method.  However, in the experiments of Section~\ref{sec:numerics} we see this is not the case.  This dependence on $\Delta$ appears to be only an artifact of the proof.  

\subsection{Noisy systems}
As is the case for many iterative algorithms, the presence of noise introduces complications both theoretically and empirically.  Theorem~\ref{thm:noise} guarantees expected exponential convergence to the noise threshold.  For pessimistic values of $D$, the noise threshold provided by Theorem~\ref{thm:noise} is greater than that of the standard method,~\eqref{eq:noise}, by a factor of $\sqrt{R}$.  In addition, large values of $\Delta$ produce large error thresholds in this bound.  As in the noiseless case, we believe this dependence on $\Delta$ to be an artifact of the proof. 

A further and important complication that noise introduces is a semi-convergence effect, a well-known effect in Algebraic Reconstruction Technique (ART) methods (see e.g.~\cite{ENP10:semi}).  For example, in Figure~\ref{fig:noise1} (d), the estimation error for the two-subspace method decreases to a point and then begins to increase.  It remains an open problem to determine an optimal stopping condition without knowledge of the solution $\vct{x}$.  

\subsection{Future Work}
The issue of detecting semiconvergence is a very deep problem.  The simple solution would be to terminate the algorithm once the residual $\|\mtx{A}\vct{x_k} - \vct{b}\|_2$ decreases below some threshold.  However, the residual decreases in each iteration even when the estimation error begins to increase.  Determining the residual threshold beyond which one should terminate is not an easy problem and work in this area continues to be done. 

We also hope to improve the error threshold bound of Theorem~\ref{thm:noise} for the two-subspace method.  We conjecture that the $(1-\Delta)$ term can be removed or improved, and that the dependence on $R$ can be reduced to $\sqrt{R}$ in the error term of Theorem~\ref{thm:noise}.  

Finally, a natural extension to our method would be to use more than two rows in each iteration.  Indeed, extensions of the two-subspace algorithm to arbitrary subspaces can be analyzed~\cite{NT12:paving}. 

\bibliography{rk}

\begin{thebibliography}{10}

\bibitem{CFMSS92}
C.~Cenker, H.G. Feichtinger, M.~Mayer, H.~Steier, and T.~Strohmer.
\newblock New variants of the {POCS} method using affine subspaces of finite
  codimension, with applications to irregular sampling.
\newblock In {\em Conf. SPIE 92 Boston}, pages 299--310, 1992.

\bibitem{censor1983strong}
Y.~Censor, P.P.B. Eggermont, and D.~Gordon.
\newblock Strong underrelaxation in {K}aczmarz's method for inconsistent
  systems.
\newblock {\em Numer. Math.}, 41(1):83--92, 1983.

\bibitem{DH97:Therate}
F.~Deutsch and H.~Hundal.
\newblock The rate of convergence for the method of alternating projections.
\newblock {\em J. Math. Anal. Appl.}, 205(2):381--405, 1997.

\bibitem{drineas2007faster}
P.~Drineas, M.W. Mahoney, S.~Muthukrishnan, and T.~Sarl{\'o}s.
\newblock Faster least squares approximation.
\newblock {\em Numerische Mathematik}, 117(2):217--249.

\bibitem{ENP10:semi}
T.~Elfving, T.~Nikazad, and P.~C. Hansen.
\newblock Semi-convergence and relaxation parameters for a class of
  {S}{I}{R}{T} algorithms.
\newblock {\em Electron. T. Numer. Ana.}, 37:321--336, 2010.

\bibitem{FS95}
H.~G. Feichtinger and T~.Strohmer.
\newblock A {K}aczmarz-based approach to nonperiodic sampling on unions of
  rectangular lattices.
\newblock In {\em Proc. Conf. SampTA-95}, pages 32--37, 1995.

\bibitem{G05:Onthe}
A.~Gal\`antai.
\newblock On the rate of convergence of the alternating projection method in
  finite dimensional spaces.
\newblock {\em J. Math. Anal. Appl.}, 310(1):30--44, 2005.

\bibitem{HN90:Onthe}
M.~Hanke and W.~Niethammer.
\newblock On the acceleration of {K}aczmarz's method for inconsistent linear
  systems.
\newblock {\em Linear Alg. Appl.}, 130:83--98, 1990.

\bibitem{HM93:Algebraic}
G.T. Herman and L.B. Meyer.
\newblock Algebraic reconstruction techniques can be made computationally
  efficient.
\newblock {\em IEEE T. Med. Imaging}, 12(3):600--609, 1993.

\bibitem{K37:Angena}
S.~Kaczmarz.
\newblock Angen\"aherte aufl\"osung von systemen linearer gleichungen.
\newblock {\em Bull. Internat. Acad. Polon.Sci. Lettres A}, pages 335--357,
  1937.

\bibitem{leventhal2010randomized}
D.~Leventhal and AS~Lewis.
\newblock Randomized methods for linear constraints: Convergence rates and
  conditioning.
\newblock {\em Math. Oper. Res.}, 35(3):641--654, 2010.

\bibitem{N86:TheMath}
F.~Natterer.
\newblock {\em The Mathematics of Computerized Tomography}.
\newblock Wiley, New York, 1986.

\bibitem{N10:rknoisy}
D.~Needell.
\newblock Randomized {K}aczmarz solver for noisy linear systems.
\newblock {\em BIT Num. Math.}, 50(2):395--403, 2010.

\bibitem{NT12:paving}
D.~Needell and J.~A. Tropp.
\newblock Convergence of randomized {K}aczmarz method.
\newblock In preparation, 2012.

\bibitem{NW12:2srk}
D.~Needell and R.~Ward.
\newblock Two-subspace projection method for coherent overdetermined systems.
\newblock Submitted for publication, 2012.

\bibitem{CP12:almost}
A.~Powell and X.~Chen.
\newblock Almost sure convergence for the {K}aczmarz algorithm with random
  measurements.
\newblock Submitted, 2012.

\bibitem{Valley}
B.~Recht and C.~Re.
\newblock Beneath the valley of the noncommutative arithmetic-geometric mean
  inequality: Conjectures, case studies, and consequences.
\newblock Submitted for publication, 2012.

\bibitem{SS87:Applications}
K.~M. Sezan and H.~Stark.
\newblock Applications of convex projection theory to image recovery in
  tomography and related areas.
\newblock In H.~Stark, editor, {\em Image Recovery: {T}heory and application},
  pages 415–--462. Acad. Press, 1987.

\bibitem{SV06:Arandom}
T.~Strohmer and R.~Vershynin.
\newblock A randomized solver for linear systems with exponential convergence.
\newblock In {\em RANDOM 2006 (10th International Workshop on Randomization and
  Computation)}, number 4110 in Lecture Notes in Computer Science, pages
  499--507. Springer, 2006.

\bibitem{SV09:Arand}
T.~Strohmer and R.~Vershynin.
\newblock A randomized {K}aczmarz algorithm with exponential convergence.
\newblock {\em J. Fourier Anal. Appl.}, 15:262--278, 2009.

\end{thebibliography}

\end{document}